\newtheorem{theorem}{Theorem}
\newtheorem{prop}[theorem]{Proposition}
\newtheorem{remark}{Remark}
\newtheorem{claim}{Claim}
\newenvironment{proof-sketch}{\noindent{\bf Sketch of Proof}\hspace*{1em}}{\qed\bigskip}
\newcommand{\RR}{\mathbb R}
\newcommand{\NN}{\mathbb N}
\renewcommand{\leq}{\leqslant}
\renewcommand{\geq}{\geqslant}
\begin{document}
\title[Nonlinear elliptic inclusions]{Nonlinear elliptic inclusions with unilateral constraint and dependence on the gradient}
\author[N.S. Papageorgiou]{Nikolaos S. Papageorgiou}
\address[N.S. Papageorgiou]{National Technical University, Department of Mathematics,
				Zografou Campus, 15780 Athens, Greece}
\email{\tt npapg@math.ntua.gr}
\author[V.D. R\u{a}dulescu]{Vicen\c{t}iu D. R\u{a}dulescu}
\address[V.D. R\u{a}dulescu]{Department of Mathematics, Faculty of Sciences, King Abdulaziz University, P.O. Box 80203, Jeddah 21589, Saudi Arabia \&  Department of Mathematics, University of Craiova, Street A.I. Cuza 13,
          200585 Craiova, Romania}
\email{\tt vicentiu.radulescu@imar.ro}
\author[D.D. Repov\v{s}]{Du\v{s}an D. Repov\v{s}}
\address[D.D. Repov\v{s}]{Faculty of Education and Faculty of Mathematics and Physics,
University of Ljubljana, 1000 Ljubljana, Slovenia}\email{dusan.repovs@guest.arnes.si}
\keywords{Convex subdifferential, Moreau-Yosida approximation, elliptic differential inclusion, Morse iteration technique, pseudomonotone map, variational inequality.\\
\phantom{aa} 2010 AMS Subject Classification: 35J60, 35K85}
\begin{abstract}
We consider a nonlinear Neumann elliptic inclusion with a source (reaction term) consisting of a convex subdifferential plus a multivalued term depending on the gradient. The convex subdifferential incorporates in our framework problems with unilateral constraints (variational inequalities). Using topological methods and the Moreau-Yosida approximations of the subdifferential term, we establish the existence of a smooth solution.
\end{abstract}
\maketitle
\section{Introduction}

Let $\Omega\subseteq\RR^N$ be a bounded domain with a $C^2$-boundary $\partial\Omega$. In this paper we study the following nonlinear Neumann elliptic differential inclusion
\begin{equation}\label{eq1}
	\left\{\begin{array}{ll}
		{\rm div}\,(a(u(z))Du(z))\in\partial\varphi(u(z))+F(z,u(z),Du(z))&\mbox{in}\ \Omega,\\
		\frac{\partial u}{\partial n}=0&\mbox{on}\ \partial\Omega.
	\end{array}\right\}
\end{equation}

In this problem, $\varphi\in\Gamma_0(\RR)$ (that is, $\varphi:\RR\rightarrow\overline{\RR}=\RR\cup\{+\infty\}$ is proper, convex and lower semicontinuous, see Section 2) and $\partial\varphi(x)$ is the subdifferential of $\varphi(\cdot)$ in the sense of convex analysis. Also $F(z,x,\xi)$ is a multivalued term with closed convex values depending on the gradient of $u$. So, problem (\ref{eq1}) incorporates variational inequalities with a multivalued reaction term.

By a solution of problem (\ref{eq1}), we understand a function $u\in H^1(\Omega)$ such that we can find $g,f\in L^2(\Omega)$ for which we have
\begin{eqnarray*}
	&&g(z)\in\partial\varphi(u(z))\ \mbox{and}\ f(z)\in F(z,u(z),Du(z))\ \mbox{for almost all}\ z\in\Omega,\\
	&&\int_{\Omega}a(u(z))(Du,Dh)_{\RR^N}dz+\int_{\Omega}(g(z)+f(z))h(z)dz=0\ \mbox{for all}\ h\in H^1(\Omega).
\end{eqnarray*}

The presence of the gradient in the multifunction $F$, precludes the use of variational methods in the analysis of (\ref{eq1}). To deal with such problems, a variety of methods have been proposed. Indicatively, we mention the works of Amann and Crandall \cite{1}, de Figueiredo, Girardi and Matzeu \cite{6}, Girardi and Matzeu \cite{9}, Loc and Schmitt \cite{14}, Pohozaev \cite{21}. All these papers consider problems with no unilateral constraint (that is, $\varphi=0$) and the reaction term $F$ is single-valued. Variational inequalities (that is, problems where $\varphi$ is the indicator function of a closed, convex set), were investigated by Arcoya, Carmona and Martinez Aparicio \cite{2}, Matzeu and Servadei \cite{16}, Mokrane and Murat \cite{18}. All have single valued source term.

Our method of proof is topological and it is based on a slight variant of Theorem 8 of Bader \cite{3} (a multivalued alternative theorem). Also, our method uses approximations of $\varphi$ and the theory of nonlinear operators of monotone type. In the next section, we recall the basic notions and mathematical tools which we will use in the sequel.

\section{Mathematical Background}

Let $X$ be a Banach space and $X^*$ be its topological dual. By $\left\langle \cdot,\cdot\right\rangle$ we denote the duality brackets for the pair $(X^*,X)$. By $\Gamma_0(X)$ we denote the cone of all convex functions $\varphi:X\rightarrow\overline{\RR}=\RR\cup\{+\infty\}$ which are proper (that is, not identically $+\infty$) and lower semicontinuous. By ${\rm dom}\,\varphi$ we denote the effective domain of $\varphi$, that is, $${\rm dom}\,\varphi :=\{u\in X:\varphi(u)<+\infty\}.$$ Given $\varphi\in\Gamma_0(X)$, the subdifferential of $\varphi$ at $u\in X$ is the set
$$\partial\varphi(u)=\{u^*\in X^*:\left\langle u^*,h\right\rangle\leq\varphi(u+h)-\varphi(u)\ \mbox{for all}\ h\in X\}.$$

Evidently $\partial\varphi(u)\subseteq X^*$ is $w^*$-closed, convex and possibly empty. If $\varphi$ is continuous at $u\in X$, then $\partial\varphi(u)\subseteq X^*$ is nonempty, $w^*$-compact and convex. Moreover, if $\varphi$ is G\^ateaux differentiable at $u\in X$, then $\partial\varphi(u)=\{\varphi'_G(u)\}$ ($\varphi'_G(u)$ being the G\^ateaux derivative of $\varphi$ at $u$). We know that the map $\partial \varphi:X\rightarrow 2^{X^*}$ is maximal monotone. If $X=H=$ a Hilbert space and $\varphi\in\Gamma_0(H)$, then for every $\lambda>0$, the ``Moreau-Yosida approximation'' $\varphi_{\lambda}$ of $\varphi$, is defined by
$$\varphi_{\lambda}(u)=\inf\left[\varphi(h)+\frac{1}{2\lambda}||h-u||^2:h\in H\right]\ \mbox{for all}\ u\in H.$$

We have the following properties:
\begin{itemize}
	\item $\varphi_{\lambda}$ is convex, ${\rm dom}\,\varphi_{\lambda}=H$;
	\item $\varphi_{\lambda}$ is Fr\'echet differentiable and the Fr\'echet derivative $\varphi'_{\lambda}$ is Lipschitz continuous with Lipschitz constant $1/\lambda$;
	\item if $\lambda_n\rightarrow 0,\ u_n\rightarrow u$ in $H$, $\varphi'_{\lambda_n}(u_n)\stackrel{w^*}{\rightarrow}u^*$ in $H$, then $u^*\in\partial\varphi(u)$.
\end{itemize}

We refer for details to Gasinski and Papageorgiou \cite{7} and Papageorgiou and Kyritsi \cite{20}.

We know that if $\varphi\in\Gamma_0(X)$, then $\varphi$ is locally Lipschitz in the interior of its effective domain (that is, on ${\rm int}\,{\rm dom}\,\varphi$). So, locally Lipschitz functions are the natural candidate to extend the subdifferential theory of convex functions.

We say that $\varphi:X\rightarrow\RR$ is locally Lipschitz if for every $u\in X$ we can find $U$ a neighborhood of $u$ and a constant $k>0$ such that
$$|\varphi(v)-\varphi(y)|\leq k||v-y||\ \mbox{for all}\ v,y\in U.$$

For such functions we can define the generalized directional derivative $\varphi^0(u;h)$ by
$$\varphi^{\circ}(u;h)=\limsup\limits_{\stackrel{u'\rightarrow u}{\lambda\downarrow 0}}\frac{\varphi(u'+\lambda h)-\varphi(u')}{\lambda}\,.$$

Then $\varphi^{\circ}(u;\cdot)$ is sublinear continuous and so we can define the nonempty $w^*$-compact set $\partial_c\varphi(u)$ by
$$\partial_c\varphi(u)=\{u^*\in X^*:\left\langle u^*,h\right\rangle\leq\varphi^{\circ}(u;h)\ \mbox{for all}\ h\in X\}.$$

We say that $\partial_c\varphi(u)$ is the ``Clarke subdifferential'' of $\varphi$ at $u\in X$. In contrast to the convex subdifferential, the Clarke subdifferential is always nonempty. Moreover, if $\varphi$ is convex, continuous (hence locally Lipschitz on $X$), then the two subdifferentials coincide, that is, $\partial\varphi(u)=\partial_c\varphi(u)$ for all $u\in X$. For further details we refer to Clarke \cite{5}.

Suppose that $X$ is a reflexive Banach space and $A:X\rightarrow X^*$ a map. We say that $A$ is ``pseudomonotone'', if the following two conditions hold:
\begin{itemize}
	\item $A$ is continuous from every finite dimensional subspace $V$ of $X$ into $X^*$ furnished with the weak topology;
	\item if $u_n\stackrel{w}{\rightarrow}u$ in $X$, $A(u_n)\stackrel{w}{\rightarrow}u^*$ in $X^*$ and $\limsup\limits_{n\rightarrow\infty}\left\langle A(u_n),u_n-u\right\rangle\leq 0$, then for every $y\in X$, we have
	$$\left\langle A(u),u-y\right\rangle\leq\liminf\limits_{n\rightarrow\infty}\left\langle A(u_n),u_n-y\right\rangle .$$
\end{itemize}

If $A:X\rightarrow X^*$ is maximal monotone, then $A$ is pseudomonotone.

A pseudomonotone map $A:X\rightarrow X^*$ which is strongly coercive, that is,
$$\frac{\left\langle A(u),u\right\rangle}{||u||}\rightarrow+\infty\ \mbox{as}\ ||u||\rightarrow\infty,$$
it is surjective (see Gasinski and Papageorgiou \cite[p. 336]{7}).

Let $V$ be a set and let $G:V\rightarrow 2^{X^*}\backslash\{\emptyset\}$ be a multifunction. The graph of $G$ is the set
$${\rm Gr}\, G=\{(v,u)\in V\times X:u\in G(v)\}.$$

$(a)$ If $V$ is a Hausdorff topological space and ${\rm Gr}\,G\subseteq V\times X$ is closed, then we say that $G$ is ``closed''.

$(b)$ If there is a $\sigma$-field $\Sigma$ defined on $V$ and ${\rm Gr}\,G\subseteq\Sigma\times B(X)$, with $B(X)$ being the Borel $\sigma$-field of $X$, then we say that $G$ is ``graph measurable".

As we already mentioned in the Introduction, our approach uses a slight variant of Theorem 8 of Bader \cite{3} in which the Banach space $V$ is replaced by its dual $V^*$ equipped with the $w^*$-topology. A careful reading of the proof of Bader \cite{3}, reveals that the result remains true if we make this change.

So, as above $X$ is a Banach space, $V^*$ is a dual Banach space, $G:X\rightarrow 2^{V^*}$ is a multifunction with nonempty, $w^*$-compact, convex values. We assume that $G(\cdot)$ is ``upper semicontinuous'' (usc for short), from $X$ with the norm topology into $V^*$ with the $w^*$-topology (denoted by $V^*_{w^*}$), that is, for all $U\subseteq V^*$ $w^*$-open, we have
$$G^-(U)=\{x\in X:G(x)\cap U\neq\emptyset\}\ \mbox{is open}.$$

Note that if ${\rm Gr}\,G\subseteq X\times V^*_{w^*}$ is closed and $G(\cdot)$ is locally compact into $V^*_{w^*}$, that is, for all $u\in X$ we can find $U$ a neighborhood of $u$ such that $\overline{G(U)}^{w^*}$ is $w^*$-compact in $V^*$, then $G$ is usc from $X$ into $V^*_{w^*}$. Also, let $K:V^*_{w^*}\rightarrow X$ be a sequentially continuous map. Then the nonlinear alternative theorem of Bader \cite{3}, reads as follows.
\begin{theorem}\label{th1}
	Assume that $G$ and $K$ are as above and $S=K\circ G:X\rightarrow 2^X\backslash\{\emptyset\}$  maps bounded sets into relatively compact sets. Define
	$$E=\{u\in X:u\in tS(u)\ \mbox{for some}\ t\in(0,1)\}.$$
	Then either $E$ is unbounded or $S(\cdot)$ admits a fixed point.
\end{theorem}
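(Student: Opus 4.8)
The plan is to prove the statement in contrapositive form: assuming that $E$ is bounded, I will exhibit a fixed point of $S$. The scheme is a Leray--Schauder-type argument adapted to the multivalued, $w^*$-topology setting.

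First I would localize and build a self-map of a compact convex set. Since $E$ is bounded, fix $\rho>0$ with $\|u\|<\rho$ for all $u\in E$, so $E\subseteq B_\rho:=\{u\in X:\|u\|<\rho\}$. As $\overline{B}_\rho$ is bounded, $S(\overline{B}_\rho)$ is relatively compact by hypothesis. Let $r:X\to\overline{B}_\rho$ be the radial retraction ($r(u)=u$ if $\|u\|\le\rho$, and $r(u)=\rho u/\|u\|$ otherwise), which is continuous, and put $T:=r\circ S:\overline{B}_\rho\to 2^X\setminus\{\emptyset\}$. Then $T(\overline{B}_\rho)$ is relatively compact, and since $\overline{B}_\rho$ is closed and convex, $C:=\overline{\mathrm{conv}}\,\overline{T(\overline{B}_\rho)}$ is a compact convex subset of $\overline{B}_\rho$ (Mazur's theorem); because $T(C)\subseteq T(\overline{B}_\rho)\subseteq C$, the map $T$ restricts to a self-map $T:C\to 2^C\setminus\{\emptyset\}$. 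The structural point is that $T$ factors as $T=(r\circ K)\circ G$, where $G$ is usc from $X$ into $V^*_{w^*}$ with nonempty $w^*$-compact convex values and $r\circ K:V^*_{w^*}\to X$ is sequentially continuous; thus $T$ lies precisely in the class of compositions ``sequentially continuous single-valued after usc convex-valued'' to which the fixed point theory of \cite{3} applies.

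Next I would apply that fixed point theory: a compact upper semicontinuous self-map of a compact convex subset of a Banach space which factors in this way admits a fixed point (this is the Kakutani--Fan--Glicksberg-type result underlying Bader \cite{3}, obtained there by approximating the convex-valued usc factor by continuous single-valued maps with values in the convex hull of its range, applying Schauder's theorem to these approximations, and passing to the limit via the compactness of the composition and the $w^*$-compactness of the values). This yields $u_*\in C$ and $v_*\in S(u_*)$ with $u_*=r(v_*)$. Now I would split into two cases. If $\|v_*\|\le\rho$, then $r(v_*)=v_*$, hence $u_*=v_*\in S(u_*)$ and $S$ has a fixed point, as wanted. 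If instead $\|v_*\|>\rho$, then $u_*=r(v_*)=\rho v_*/\|v_*\|$, so $\|u_*\|=\rho$ and $u_*=t\,v_*$ with $t:=\rho/\|v_*\|\in(0,1)$; since $v_*\in S(u_*)$, this gives $u_*\in t\,S(u_*)$, i.e.\ $u_*\in E$, which contradicts $E\subseteq B_\rho$. Hence only the first case is possible, and the theorem follows.

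\textbf{The main obstacle} is the fixed point engine invoked in the second step. Because the relevant multifunction takes values in the dual equipped with the (generally non-metrizable) $w^*$-topology, the standard approximation of usc convex-valued maps by single-valued ones, and the passage from approximate to genuine fixed points, cannot be carried out directly with sequences; it is exactly the hypotheses that $S$ maps bounded sets into relatively compact sets, that $G$ has $w^*$-compact convex values, and that $K$ is sequentially continuous, that make these limiting steps legitimate. Checking that Bader's construction --- the finite-dimensional approximations, the requisite upper semicontinuity of the composition, and the fixed-point index argument behind the alternative --- goes through verbatim after replacing $V$ by $V^*_{w^*}$ is the substantive content of the ``slight variant'' mentioned in the text, and is where the real work lies.
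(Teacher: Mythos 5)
Your proposal is correct in outline, but it is worth noting that the paper itself offers no proof of Theorem \ref{th1} at all: it simply quotes Theorem 8 of Bader \cite{3} and asserts that ``a careful reading'' of Bader's proof shows the statement survives when the Banach space $V$ is replaced by $V^*$ with the $w^*$-topology. What you do differently is to re-derive the alternative from the underlying fixed point theorem for Bader's composition class: assuming $E$ bounded, you pass to the radial retraction $r$, form $T=(r\circ K)\circ G$ on the compact convex set $C=\overline{\mathrm{conv}}\,\overline{T(\overline{B}_\rho)}$ (Mazur), obtain $u_*=r(v_*)$ with $v_*\in S(u_*)$, and dispose of the case $\|v_*\|>\rho$ by the Leray--Schauder boundary trick, which correctly uses the strict inequality $\|u\|<\rho$ on $E$. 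This reduction is sound, and you are right to keep the composition structure throughout: since $K$ is nonlinear, $S=K\circ G$ need not have convex values, so the plain Kakutani--Fan--Glicksberg theorem is not available and the factorization ``usc convex-valued followed by sequentially continuous single-valued'' is essential. The trade-off is that the genuinely hard content --- the fixed point theorem for such compositions when the intermediate space carries the non-metrizable $w^*$-topology, including the single-valued approximations and the limit passage that uses the $w^*$-compact convex values of $G$ and the sequential continuity of $K$ --- is still deferred to \cite{3}, exactly as in the paper; you flag this honestly, so your argument is at the same level of completeness as the paper's, while making the derivation of the alternative from the fixed point principle explicit rather than leaving it inside the citation.
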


\section{Existence Theorem}

In this section we prove an existence theorem for problem (\ref{eq1}). We start by introducing the hypotheses on the data of problem (\ref{eq1}).

\smallskip
$H(a)$: $a:\RR\rightarrow\RR$ is a function which satisfies
\begin{eqnarray*}
	&&|a(x)-a(y)|\leq k|x-y|\ \mbox{for all}\ x,y\in\RR,\ \mbox{some}\ k>0,\\
	&&0<c_1\leq a(x)\leq c_2\ \mbox{for all}\ x\in\RR.
\end{eqnarray*}

$H(\varphi)$: $\varphi\in\Gamma_0(\RR)$ and $0\in\partial\varphi(0)$.
\begin{remark}
	We recall that in $\RR\times\RR$, every maximal monotone set is of the subdifferential type. In higher dimensions this is no longer true (see Papageorgiou and Kyritsi \cite[p. 175]{20}).
\end{remark}

$H(F)$: $F:\Omega\times\RR\times\RR^N\rightarrow P_{f_c}(\RR)$ is a multifunction such that
\begin{itemize}
	\item[(i)] for all $(x,\xi)\in \RR\times\RR^N,\ z\mapsto F(z,x,\xi)$ is graph measurable;
	\item[(ii)] for almost all $z\in\Omega,\ (x,\xi)\mapsto F(z,x,\xi)$ is closed;
	\item[(iii)] for almost all $z\in\Omega$ and all $(x,\xi,v)\in {\rm Gr}\,F(z,\cdot,\cdot)$, we have
	$$|v|\leq\gamma_1(z,|x|)+\gamma_2(z,|x|)|\xi|$$
	with \begin{eqnarray*}
		&&\sup[\gamma_1(z,s):0\leq s\leq k]\leq\eta_{1,k}(z)\ \mbox{for almost all}\ z\in\Omega,\\
		&&\sup[\gamma_2(z,s):0\leq s\leq k]\leq\eta_{2,k}(z)\ \mbox{for almost all}\ z\in\Omega,\\
		\mbox{and}&&\eta_{1,k},\eta_{2,k}\in L^{\infty}(\Omega);
	\end{eqnarray*}
	\item[(iv)] there exists $M>0$ such that if $|x_0|>M$, then we can find $\delta>0$ and $\eta>0$ such that
	$$\inf[vx+c_1|\xi|^2:|x-x_0|+|\xi|\leq\delta,v\in F(z,x,\xi)]\geq\eta>0\ \mbox{for almost all}\ z\in\Omega,$$
	with $c_1>0$ as in hypothesis $H(a)$;
	\item[(v)] for almost all $z\in\Omega$ and all $(x,\xi,v)\in {\rm Gr}\,F(z,\cdot,\cdot)$, we have
	$$vx\geq-c_3|x|^2-c_4|x|\xi|-\gamma_3(z)|x|$$
	with $c_3,c_4>0$ and $\gamma_3\in L^1(\Omega)_+$.
\end{itemize}
\begin{remark}
	Hypothesis $H(F)(iv)$ is an extension to multifunctions of the Nagumo-Hartman condition for continuous vector fields (see Hartman \cite[p. 433]{10}, Knobloch \cite{12} and Mawhin \cite{17}).
\end{remark}

Let $\hat{a}:H^1(\Omega)\rightarrow H^1(\Omega)^*$ be the nonlinear continuous map defined by
\begin{equation}\label{eq2}
	\left\langle \hat{a}(u),h\right\rangle=\int_{\Omega}a(u)(Du,Dh)_{\RR^N}dz\ \mbox{for all}\ u,h\in H^1(\Omega).
\end{equation}
\begin{prop}\label{prop2}
	If hypotheses $H(a)$ hold, then the map $\hat{a}:H^1(\Omega)\rightarrow H^1(\Omega)^*$ defined by (\ref{eq2}) is pseudomonotone.
\end{prop}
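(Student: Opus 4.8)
The plan is to prove that $\hat a$ is bounded, continuous, and has the property $(S)_+$ (that is: $u_n\stackrel{w}{\to}u$ in $H^1(\Omega)$ together with $\limsup_n\langle\hat a(u_n),u_n-u\rangle\le0$ forces $u_n\to u$ in $H^1(\Omega)$); since a bounded, continuous $(S)_+$ map on a reflexive Banach space is pseudomonotone, this suffices. Boundedness is immediate from $H(a)$: by the Cauchy--Schwarz inequality and $a\le c_2$ one gets $|\langle\hat a(u),h\rangle|\le c_2\|Du\|_2\|Dh\|_2\le c_2\|u\|\,\|h\|$, hence $\|\hat a(u)\|_*\le c_2\|u\|$.

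Next I would record the continuity of $\hat a$ (already announced in the text). If $u_n\to u$ in $H^1(\Omega)$, then passing to a subsequence $u_n\to u$ a.e.\ and $Du_n\to Du$ in $L^2(\Omega,\RR^N)$. Writing $a(u_n)Du_n-a(u)Du=a(u_n)(Du_n-Du)+(a(u_n)-a(u))Du$, the first term is bounded in $L^2$ by $c_2\|Du_n-Du\|_2\to0$, while for the second term $|a(u_n)-a(u)|\le 2c_2$, $a(u_n)-a(u)\to0$ a.e., and $(a(u_n)-a(u))|Du|\le 2c_2|Du|\in L^2$, so the dominated convergence theorem gives $(a(u_n)-a(u))Du\to0$ in $L^2(\Omega,\RR^N)$. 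Hence $a(u_n)Du_n\to a(u)Du$ in $L^2(\Omega,\RR^N)$, so $\hat a(u_n)\to\hat a(u)$ in $H^1(\Omega)^*$; uniqueness of the limit upgrades this to the full sequence. In particular $\hat a$ is continuous from every finite dimensional subspace of $H^1(\Omega)$ into $H^1(\Omega)^*$ with the weak topology, which is the first requirement in the definition of pseudomonotonicity.

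The heart of the argument is the $(S)_+$ step. Let $u_n\stackrel{w}{\to}u$ in $H^1(\Omega)$ with $\limsup_n\langle\hat a(u_n),u_n-u\rangle\le0$. By the compact embedding $H^1(\Omega)\hookrightarrow L^2(\Omega)$ we have $u_n\to u$ in $L^2(\Omega)$, and along a subsequence $u_n\to u$ a.e., whence $a(u_n)\to a(u)$ a.e.\ with $|a(u_n)|\le c_2$; by dominated convergence $a(u_n)Du\to a(u)Du$ in $L^2(\Omega,\RR^N)$. Since $Du_n-Du\stackrel{w}{\to}0$ in $L^2(\Omega,\RR^N)$, combining strong and weak convergence gives
$$\int_\Omega a(u_n)(Du,Du_n-Du)_{\RR^N}\,dz\to0.$$
Now $\langle\hat a(u_n),u_n-u\rangle=\int_\Omega a(u_n)|Du_n-Du|^2\,dz+\int_\Omega a(u_n)(Du,Du_n-Du)_{\RR^N}\,dz$, so using $a(u_n)\ge c_1>0$ and the above limit, $c_1\limsup_n\|Du_n-Du\|_2^2\le\limsup_n\int_\Omega a(u_n)|Du_n-Du|^2\,dz\le0$, hence $Du_n\to Du$ in $L^2(\Omega,\RR^N)$. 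Together with $u_n\to u$ in $L^2(\Omega)$ this gives $u_n\to u$ in $H^1(\Omega)$ along the subsequence, and since every subsequence has a further subsequence with the same strong limit $u$, the whole sequence converges.

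Finally I would assemble: if in addition $\hat a(u_n)\stackrel{w}{\to}u^*$ in $H^1(\Omega)^*$, then $u_n\to u$ by the previous step, so continuity yields $\hat a(u_n)\to\hat a(u)$ and $u^*=\hat a(u)$, and for every $y\in H^1(\Omega)$ we get $\langle\hat a(u_n),u_n-y\rangle\to\langle\hat a(u),u-y\rangle$, in particular $\langle\hat a(u),u-y\rangle\le\liminf_n\langle\hat a(u_n),u_n-y\rangle$. Hence $\hat a$ is pseudomonotone. I expect the only genuinely non-routine point to be the limit $\int_\Omega a(u_n)(Du,Du_n-Du)_{\RR^N}\,dz\to0$: this is where the compactness of $H^1(\Omega)\hookrightarrow L^2(\Omega)$ is used (to produce the a.e.\ convergence needed for dominated convergence), and it is the uniform ellipticity bound $a\ge c_1$ that then converts the hypothesis $\limsup\le0$ into honest strong convergence of the gradients.
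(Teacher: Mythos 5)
Your proposal is correct and follows essentially the same route as the paper: the heart in both cases is the decomposition $\langle\hat a(u_n),u_n-u\rangle=\int_\Omega a(u_n)|Du_n-Du|^2dz+\int_\Omega a(u_n)(Du,Du_n-Du)_{\RR^N}dz$, the ellipticity bound $a\ge c_1$ forcing $Du_n\to Du$ in $L^2(\Omega,\RR^N)$ and hence $u_n\to u$ in $H^1(\Omega)$, followed by identification of the weak limit of $\hat a(u_n)$. The only difference is the wrapper — the paper invokes the criterion (GP) for bounded everywhere-defined maps from Gasinski--Papageorgiou, while you use the bounded-continuous-$(S)_+$ criterion and in any case verify the definition directly; your extra details (dominated convergence for $a(u_n)Du\to a(u)Du$ and the subsequence principle) correctly fill in steps the paper leaves implicit.
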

\begin{proof}
	Evidently $\hat{a}(\cdot)$ is bounded (that is, maps bounded sets to bounded sets), see hypotheses $H(a)$ and it is defined on all of $H^1(\Omega)$. So, in order to prove  the desired pseudomonotonicity of $\hat{a}(\cdot)$, it suffices to show the following:
	
	\textit{(GP):} ``If $u_n\stackrel{w}{\rightarrow}u$ in $H^1(\Omega),\ \hat{a}(u_n)\stackrel{w}{\rightarrow}u^*$ in $H^1(\Omega)^*$ and $\limsup\limits_{n\rightarrow\infty}\left\langle \hat{a}(u_n),u_n-u\right\rangle\leq 0$,\\ then $u^*=\hat{a}(u)$ and $\left\langle \hat{a}(u_n),u_n\right\rangle\rightarrow\left\langle \hat{a}(u),u\right\rangle$''\\ (see Gasinski and Papageorgiou \cite{7}, Proposition 3.2.49, p. 333).
	
	So, according to (GP) above we consider a sequence $\{u_n\}_{n\geq 1}\subseteq H^1(\Omega)$ such that
	\begin{equation}\label{eq3}
		u_n\stackrel{w}{\rightarrow}u\ \mbox{in}\ H^1(\Omega),\ \hat{a}(u_n)\stackrel{w}{\rightarrow}u^*\ \mbox{in}\ H^1(\Omega)^*\ \mbox{and}\ \limsup\limits_{n\rightarrow\infty}\left\langle \hat{a}(u_n),u_n-u\right\rangle\leq 0.
	\end{equation}
	
	We have
	\begin{eqnarray}\label{eq4}
		\left\langle \hat{a}(u_n),u_n-u\right\rangle&=&\int_{\Omega}a(u_n)(Du_n,Du_n-Du)_{\RR^N}dz\nonumber\\
		&=&\int_{\Omega}a(u_n)|Du_n-Du|^2dz+\int_{\Omega}a(u_n)(Du,Du_n-Du)_{\RR^N}dz.
	\end{eqnarray}
	
	Hypotheses $H(a)$ and (\ref{eq3}) imply that
	\begin{equation}\label{eq5}
		\int_{\Omega}a(u_n)(Du,Du_n-Du)_{\RR^N}dz\rightarrow 0\ \mbox{as}\ n\rightarrow\infty.
	\end{equation}
	
	Also we have
	\begin{eqnarray}\label{eq6}
		&&\int_{\Omega}a(u_n)|Du_n-Du|^2dz\geq c_1||Du_n-Du||^2_2\ (\mbox{see hypotheses}\ H(a)),\nonumber\\
		&\Rightarrow&Du_n\rightarrow Du\ \mbox{in}\ L^2(\Omega,\RR^N)\ (\mbox{see (\ref{eq3}), (\ref{eq4}), \eqref{eq5}})\nonumber\\
		&\Rightarrow&u_n\rightarrow u\ \mbox{in}\ H^1(\Omega)\ (\mbox{see (\ref{eq3})}).
	\end{eqnarray}
	
	For all $h\in H^1(\Omega)$, we have
	\begin{eqnarray*}
		&&\left\langle \hat{a}(u_n),h\right\rangle=\int_{\Omega}a(u_n)(Du_n,Dh)_{\RR^N}dz\rightarrow
\int_{\Omega}a(u)(Du,Dh)_{\RR^N}dz=\left\langle \hat{a}(u),h\right\rangle\\
		&&\hspace{6cm}(\mbox{see (\ref{eq3}) and hypotheses}\ H(a)),\\
		&\Rightarrow& \hat{a}(u_n)\stackrel{w}{\rightarrow}\hat{a}(u)\ \mbox{in}\ H^1(\Omega)^*,\\
		&\Rightarrow&\hat{a}(u)=u^*\ (\mbox{see (\ref{eq3})}).
	\end{eqnarray*}
	
	From (\ref{eq6}) and the continuity of $a(\cdot)$ (see hypotheses $H(a)$), we have
	$$\left\langle \hat{a}(u_n),u_n\right\rangle\rightarrow\left\langle \hat{a}(u),u\right\rangle\,.$$
	
	Therefore property (GP) is satisfied and so we conclude that $\hat{a}(\cdot)$ is pseudomonotone.
\end{proof}

Next we will approximate problem (\ref{eq1}) using the Moreau-Yosida approximations of $\varphi\in\Gamma_0(\RR)$. For this approach to lead to a solution of problem (\ref{eq1}), we need to have a priori bounds for the approximate solutions. The proposition which follows is a crucial step in this direction. Its proof is based on the so-called ``Morse iteration technique''.

So, we consider the following nonlinear Neumann problem:
\begin{eqnarray}\label{eq7}
	\left\{\begin{array}{ll}
		-{\rm div}\,(a(u(z))Du(z))=g(z,u(z))&\mbox{in}\ \Omega,\\
		\frac{\partial u}{\partial n}=0&\mbox{on}\ \partial\Omega.
	\end{array}\right\}
\end{eqnarray}

The conditions on the reaction term $g(z,x)$ are the following:

\smallskip
$H(g):$ $g:\Omega\times\RR\rightarrow\RR$ is a Carath\'eodory function (that is, for all $x\in\RR$, $z\mapsto g(z,x)$ is measurable and for almost all $z\in\Omega$, $x\mapsto g(z,x)$ is continuous) and
$$|g(z,x)|\leq a(z)(1+|x|^{r-1})\ \mbox{for almost all}\ z\in\Omega,\ \mbox{all}\ x\in\RR,$$
with $a\in L^{\infty}(\Omega)_+$, $2\leq r<2^*=\left\{\begin{array}{ll}
	\frac{2N}{N-2}&\mbox{if}\ N\geq 3\\
	+\infty&\mbox{if}\ N=1,2
\end{array}\right.$ (the critical Sobolev exponent).

By a weak solution of problem (\ref{eq7}), we understand a function $u\in H^1(\Omega)$ such that
$$\int_{\Omega}a(u)(Du,Dh)_{\RR^N}dz=\int_{\Omega}g(z,u)hdz\ \mbox{for all}\ h\in H^1(\Omega).$$
\begin{prop}\label{prop3}
	If hypothesis $H(g)$ holds and $u\in H^1(\Omega)$ is a nontrivial weak solution of (\ref{eq7}), then $u\in L^{\infty}(\Omega)$ and $||u||_{\infty}\leq M=M(||a||_{\infty},N,2,||u||_{2^*})$.
\end{prop}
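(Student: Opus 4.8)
The plan is to derive the $L^\infty$-bound by the Moser iteration scheme (the ``Morse iteration technique'' referred to above). Since $u\in H^1(\Omega)$, the Sobolev embedding already gives $u\in L^{2^*}(\Omega)$; the idea is to improve this integrability step by step: if $u\in L^{p}(\Omega)$ for some $p\ge 2^*$, then testing the weak formulation of (\ref{eq7}) with a suitable truncated power of $u$ yields $u\in L^{p2^*/2}(\Omega)$ together with a quantitative estimate of the norms, and iterating while tracking all constants produces $u\in L^\infty(\Omega)$ with a bound of the stated form. Concretely, for a parameter $\gamma\ge 1$ and a truncation level $m\ge 1$, put $u_m=\operatorname{sgn}(u)\min\{|u|,m\}$ and use the test function $h=|u_m|^{2(\gamma-1)}u$; this lies in $H^1(\Omega)$ because the truncation makes $s\mapsto \min\{|s|,m\}^{2(\gamma-1)}s$ globally Lipschitz. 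Writing $w=|u_m|^{\gamma-1}u$, one checks that $(Du,Dh)_{\RR^N}\ge|u_m|^{2(\gamma-1)}|Du|^2$ and $|Dw|^2\le\gamma^2|u_m|^{2(\gamma-1)}|Du|^2$ a.e., so, using $a(u)\ge c_1$ from $H(a)$, the left-hand side of the tested equation is at least $\frac{c_1}{\gamma^2}\|Dw\|_2^2$. On the right-hand side $H(g)$ gives $|g(z,u)|\le\|a\|_\infty(1+|u|^{r-1})$, and, since $|u_m|\le|u|$, a short computation (valid for $\gamma\ge1$, $m\ge1$) bounds $\int_\Omega|g(z,u)h|\,dz$ by $\|a\|_\infty\big[|\Omega|+\int_\Omega|w|^2(1+|u|^{r-2})\,dz\big]$. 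Adding $\frac{c_1}{\gamma^2}\|w\|_2^2$ to both sides and applying the Sobolev inequality $\|w\|_{2^*}^2\le C_S\|w\|_{H^1}^2$ yields
\[
\|w\|_{2^*}^2\le c_6\,\gamma^2\Big[\,|\Omega|+\int_\Omega|w|^2\big(1+|u|^{r-2}\big)\,dz\,\Big],
\]
with $c_6=c_6(\|a\|_\infty,c_1,N,\Omega)$.

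The decisive step --- and the one I expect to be the main obstacle --- is to control $\int_\Omega|w|^2|u|^{r-2}\,dz$, where the strict subcriticality $r<2^*$ must be exploited so that the resulting constant depends on $u$ only through $\|u\|_{2^*}$. Split $\Omega=\{|u|\le L\}\cup\{|u|>L\}$. On the first set $1+|u|^{r-2}\le 2L^{r-2}$, which contributes at most $2L^{r-2}\|w\|_2^2$. On the second set, H\"older's inequality with exponents $2^*/2$ and $2^*/(2^*-2)$ gives $\int_{\{|u|>L\}}|w|^2|u|^{r-2}\,dz\le\theta(L)\,\|w\|_{2^*}^2$, where $\theta(L)=\big(\int_{\{|u|>L\}}|u|^{(r-2)2^*/(2^*-2)}\,dz\big)^{(2^*-2)/2^*}$. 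Because $(r-2)2^*/(2^*-2)<2^*$, the integrand is in $L^1(\Omega)$, so $\theta(L)\to0$ as $L\to\infty$; moreover a further use of H\"older's and Chebyshev's inequalities gives $\theta(L)\le C(\|u\|_{2^*},N,r)\,L^{-\beta}$ for some $\beta=\beta(N,r)>0$. Choosing $L=L(\gamma)$ so that $2c_6\gamma^2\theta(L)\le 1/2$ (hence $L(\gamma)$ and $L(\gamma)^{r-2}$ grow only polynomially in $\gamma$) lets us absorb the $\|w\|_{2^*}^2$ term into the left-hand side, and, using $\|w\|_2^2=\int_\Omega|u_m|^{2(\gamma-1)}|u|^2\,dz\le\|u\|_{2\gamma}^{2\gamma}$ whenever $2\gamma\le p$, we reach a recursive inequality
\[
\|w\|_{2^*}^2\le c_7\,\gamma^{\kappa}\big(1+\|u\|_{2\gamma}^{2\gamma}\big),
\]
where $\kappa=\kappa(N,r)>0$ and $c_7$ depends only on $\|a\|_\infty$, $c_1$, $r$, $N$, $\Omega$ and $\|u\|_{2^*}$.

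To finish, set $p_0=2^*$, $p_{k+1}=p_k\,2^*/2$, and $\gamma_k=p_k/2$ (so $\gamma_k\ge1$ and $2\gamma_k=p_k$). Applying the recursive inequality at step $k$ and letting $m\to\infty$ --- at which point Fatou's lemma converts the estimate for $w$ into one for $|u|^{\gamma_k}$ --- gives $\|u\|_{p_{k+1}}^{p_{k+1}}\le\big[c_7\gamma_k^{\kappa}(1+\|u\|_{p_k}^{p_k})\big]^{2^*/2}$, and in particular $u\in L^{p_{k+1}}(\Omega)$. Taking $p_{k+1}$-th roots and setting $b_k=\max\{1,\|u\|_{p_k}\}$, this reads $b_{k+1}\le(2c_7\gamma_k^{\kappa})^{1/p_k}\,b_k$, whence $b_k\le b_0\prod_{j=0}^{k-1}(2c_7\gamma_j^{\kappa})^{1/p_j}$. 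Since $p_j=2^*(2^*/2)^j$ grows geometrically, the series $\sum_j p_j^{-1}$ and $\sum_j p_j^{-1}\log\gamma_j$ converge, so the infinite product is finite and $\limsup_{k\to\infty}\|u\|_{p_k}\le C_\infty\max\{1,\|u\|_{2^*}\}$, with $C_\infty$ depending only on $\|a\|_\infty$, $c_1$, $r$, $N$, $\Omega$ and $\|u\|_{2^*}$. As $p_k\to\infty$ and $\Omega$ is bounded, a Chebyshev argument then shows $u\in L^\infty(\Omega)$ with $\|u\|_\infty\le M:=C_\infty\max\{1,\|u\|_{2^*}\}$; absorbing the fixed structural constants into $M$, this is the asserted dependence $M=M(\|a\|_\infty,N,2,\|u\|_{2^*})$. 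When $N\le2$ one has $2^*=+\infty$ and the scheme simplifies, replacing $2^*$ throughout by an arbitrarily large fixed finite exponent. The bulk of the work, and the only genuinely delicate point, is the second paragraph: exploiting $r<2^*$ to tame $\int_\Omega|w|^2|u|^{r-2}\,dz$ while keeping the iteration constants polynomial in $\gamma$, retaining the auxiliary truncation $m$ so that every quantity is a priori finite, and ensuring --- via the Chebyshev bound on $\{|u|>L\}$ --- that the final constant depends on the solution only through $\|u\|_{2^*}$.
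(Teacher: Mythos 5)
Your argument is correct, and it rests on the same basic mechanism as the paper's proof: a Moser-type iteration starting from $u\in L^{2^*}(\Omega)$, testing the weak formulation with truncated powers of $u$, using $a\geq c_1$ and the Sobolev embedding on the left-hand side, and removing the truncation by monotone convergence/Fatou. The difference is where the subcriticality $r<2^*$ is spent. The paper calibrates the test exponent to the growth of $g$: with $\vartheta=p_n-r$ it acts with $u_k^{\vartheta+1}$, so the reaction term is bounded outright by $c\left(1+\int_{\Omega}u^{p_n}dz\right)$ and no absorption is needed; the price is the recursion $p_{n+1}=2^*+\frac{2^*}{2}(p_n-r)$, and $r<2^*$ is then used only in the Claim that $p_n\rightarrow\infty$. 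You instead keep the standard exponents $p_{k+1}=\frac{2^*}{2}p_k$ and tame the weight $|u|^{r-2}$ by the splitting $\{|u|\leq L\}\cup\{|u|>L\}$ together with H\"older and Chebyshev, absorbing the tail contribution into the left-hand side; this is where $r<2^*$ enters for you, and it is what makes your constants depend on $u$ only through $\|u\|_{2^*}$. Your explicit tracking of the polynomial growth of the constants in $\gamma$ and the convergent product $\prod_j\left(2c_7\gamma_j^{\kappa}\right)^{1/p_j}$ is in fact a more careful rendering of the paper's terse passage from \eqref{eq13} to the uniform bound $\|u\|_{p_n}\leq c_7$ (``H\"older and Young''), where the dependence of the constants on $n$ is not tracked explicitly and really requires exactly this kind of product argument. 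Finally, the paper reduces to $u\geq 0$ by writing $u=u^+-u^-$, whereas your signed truncation $u_m$ treats both signs at once; both routes yield a bound of the same form, depending on $\|a\|_{\infty}$, $N$, $r$, $c_1$, $\Omega$ and $\|u\|_{2^*}$, as asserted.
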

\begin{proof}
	Let $p_0=2^*$ and $p_{n+1}=2^*+\frac{2^*}{2}(p_n-r)$ for all $n\in\NN_0$. Evidently $\{p_n\}_{n\geq 0}$ is increasing. First suppose that $u\geq 0$. For every $k\in\NN$ we set
	\begin{equation}\label{eq8}
		u_k=\min\{u,k\}\in H^1(\Omega)\,.
	\end{equation}
	
	Let $\vartheta=p_n-r>0$ (note that $p_n\geq 2^*>r$). We have
	\begin{equation}\label{eq9}
		\hat{a}(u)=N_g(u)\ \mbox{in}\ H^1(\Omega)^*
	\end{equation}
	with $N_g(u)(\cdot)=g(\cdot,u(\cdot))\in L^{r'}(\Omega)\subseteq H^1(\Omega)^*,\frac{1}{r}+\frac{1}{r'}=1$ (the Nemytskii map corresponding to $g$). On (\ref{eq9}) we act with $u^{\vartheta+1}_k$ (see (\ref{eq8})). Then
	\begin{equation}\label{eq10}
		\left\langle \hat{a}(u),u^{\vartheta+1}_k\right\rangle=\int_{\Omega}g(z,u)u^{\vartheta+1}_kdz.
	\end{equation}
	
	Note that
	\begin{eqnarray}\label{eq11}
		&&\left\langle \hat{a}(u),u^{\vartheta+1}_k\right\rangle=\int_{\Omega}a(u)(Du,Du^{\vartheta+1}_k)_{\RR^N}dz\nonumber\\
		&=&(\vartheta+1)\int_{\Omega}u^{\vartheta}_ka(u)(Du,Du_k)_{\RR^N}dz\nonumber\\
		&\geq&(\vartheta+1)\int_{\Omega}u^{\vartheta}_kc_1|Du_k|^2dz\ (\mbox{see hypothesis H(a) and recall that}\ u\geq 0)\nonumber\\
		&=&c_1(\vartheta+1)\frac{2}{\vartheta+2}\int_{\Omega}|Du_k^{\frac{\vartheta+2}{2}}|^2dz.
	\end{eqnarray}
	
	Also we have
	\begin{eqnarray}\label{eq12}
		&&\int_{\Omega}g(z,u)u^{\vartheta+1}_kdz\nonumber\\
		&\leq&\int_{\Omega}a(z)(1+u^{r-1})u^{\vartheta+1}dz\ (\mbox{see hypothesis $H(g)$, (\ref{eq8}) and recall}\ u\geq 0)\nonumber\\
		&\leq&c_3(1+\int_{\Omega}u^{p_n}dz)\ \mbox{for some}\ c_3>0\ (\mbox{since}\ \vartheta+1<\vartheta+r=p_n).
	\end{eqnarray}
	
	We return to (\ref{eq10}) and use (\ref{eq11}) and (\ref{eq12}). Then
	\begin{eqnarray*}
		&&c_1(\vartheta+1)\frac{2}{\vartheta+2}\int_{\Omega}\left[|Du_k^{\frac{\vartheta+2}{2}}|^2+|u_k^{\frac{\vartheta+2}{2}}|^2\right]dz\\
		&\leq&c_4(1+\int_{\Omega}u^{p_n}dz)\ \mbox{for some}\ c_4>0\ (\mbox{since}\ \vartheta+r=p_n)\\
		\Rightarrow&&||u_k^{\frac{\vartheta+2}{2}}||^2\leq c_5(1+\int_{\Omega}u^{p_n}dz)\ \mbox{for some}\ c_5>0,\ \mbox{all}\ k\in\NN,\ \mbox{and}\ n\in\NN_0.
	\end{eqnarray*}
	
	Here $||\cdot||$ denotes the norm of $H^1(\Omega)$ (recall that $||v||=[||v||^2_2+||Dv||^2_2]^{1/2}$ for all $v\in H^1(\Omega)$).
	
	By the Sobolev embedding theorem (see (\ref{eq8}) and note that $H^1(\Omega)\hookrightarrow L^{\frac{2_{p_{n+1}}}{p_n}}(\Omega)$) we have
	$$||u_k||^{p_n}_{p_{n+1}}\leq c_6(1+\int_{\Omega}u^{p_n}dz)\ \mbox{for some}\ c_6>0,\ \mbox{all}\ k\in\NN_0\ \mbox{and}\ n\in\NN .$$
	
	Let $k\rightarrow\infty$. Then $u_k(z)\uparrow u(z)$ for almost all $z\in\Omega$ (see (\ref{eq8})). So, by the monotone convergence theorem, we have
	\begin{equation}\label{eq13}
		\left(\int_{\Omega}u^{p_{n+1}}dz\right)^{\frac{p_n}{p_{n+1}}}\leq c_6\left(1+\int_{\Omega}u^{p_n}dz\right)\ \mbox{for all}\ n\in\NN_0.
	\end{equation}
	
	Recall that $p_0=2^*$ and by the Sobolev embedding theorem we have $u\in L^{2^*}(\Omega)$. So, from (\ref{eq13}) and by induction we infer that $u\in L^{p_n}(\Omega)$ for all $n\in\NN_0$. Also we have
	$$||u||^{p_n}_{p_{n+1}}\leq c_6(1+||u||^{p_n}_{p_n})\ \mbox{for all}\ n\in\NN_0\ (\mbox{see (\ref{eq13})}).$$
	
	Since $p_n<p_{n+1}$, using the H\"{o}lder and Young inequalities (the latter with $\epsilon>0$ small), we obtain
	\begin{equation}\label{eq14}
		||u||_{p_n}\leq c_7\ \mbox{for some}\ c_7>0,\ \mbox{all}\ n\in \NN_0.
	\end{equation}
	\begin{claim}
		$p_n\rightarrow\infty$.
	\end{claim}
	
	Arguing by contradiction, suppose that the claim were not true. Since $\{p_n\}_{n\in\NN_0}$ is increasing, we have
	\begin{equation}\label{eq15}
		p_n\rightarrow p_*>2^*.
	\end{equation}
	
	By definition
	\begin{eqnarray*}
		&&p_{n+1}=2^*+\frac{2^*}{2}(p_n-r),\\
		&\Rightarrow&p_*=2^*+\frac{2^*}{2}(p_*-r)\ (\mbox{see (\ref{eq15})})\\
		&\Rightarrow&p_*\left(\frac{2^*}{2}-1\right)=2^*\left(\frac{r}{2}-1\right)<2^*\left(\frac{2^*}{2}-1\right)\ (\mbox{since}\ 2\leq r<2^*),\\
		&\Rightarrow&p_*<2^*,\ \mbox{a contradiction (see \ref{eq15})}.
	\end{eqnarray*}
	
	This proves Claim 1.
	
	So, passing to the limit as $n\rightarrow\infty$ in (\ref{eq14}), it follows from Gasinski and Papageorgiou \cite[p. 477]{8} that
	$$||u||_{\infty}\leq c_7,\ \mbox{hence}\ u\in L^{\infty}(\Omega).$$
	
	Moreover, it is clear from the above proof that $||u||_{\infty}\leq M=M(||a||_{\infty},N,2,||u||_{2^*})$.
	
	Finally for the general case, we write $u=u^+-u^-$, with $u^{\pm}=\max\{\pm u,0\}\geq 0$ and work with each one separately as above, to conclude $u^{\pm}\in L^{\infty}(\Omega)$, hence $u\in L^{\infty}(\Omega)$.
\end{proof}

Now for $\lambda>0$, let $\varphi_{\lambda}$ be the Moreau-Yosida approximation of $\varphi\in\Gamma_0(\RR)$ and for $\vartheta\in L^{\infty}(\Omega)$, consider the following auxiliary Neumann problem:
\begin{equation}\label{eq16}
	\left\{\begin{array}{ll}
		-{\rm div}\,(a(u(z))Du(z))+u(z)+\varphi'_{\lambda}(u(z))=\vartheta(z)&\mbox{in}\ \Omega,\\
		\frac{\partial u}{\partial n}=0&\mbox{on}\ \partial\Omega
	\end{array}\right\}
\end{equation}
\begin{prop}\label{prop4}
	If hypotheses $H(a),H(\varphi)$ hold and $\vartheta\in L^{\infty}(\Omega)$, then problem (\ref{eq16}) admits a unique solution $u\in C^1(\overline{\Omega})$.
\end{prop}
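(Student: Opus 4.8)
The plan is to obtain a weak solution of (\ref{eq16}) via the surjectivity of pseudomonotone, strongly coercive operators, then to bootstrap its regularity up to $C^1(\overline{\Omega})$, and finally to prove uniqueness by means of a Kirchhoff-type test function. Since $\varphi'_{\lambda}$ is Lipschitz and, because $0\in\partial\varphi(0)$ forces $0$ to be a global minimiser of $\varphi$ and hence of $\varphi_{\lambda}$, we have $\varphi'_{\lambda}(0)=0$ and so $|\varphi'_{\lambda}(x)|\le|x|/\lambda$; therefore the Nemytskii map $N_{\varphi'_{\lambda}}$ sends $L^2(\Omega)$ continuously into itself. I would then introduce $V\colon H^1(\Omega)\to H^1(\Omega)^*$ by
\[
\langle V(u),h\rangle=\langle\hat{a}(u),h\rangle+\int_{\Omega}u\,h\,dz+\int_{\Omega}\varphi'_{\lambda}(u)\,h\,dz\qquad\text{for all }u,h\in H^1(\Omega),
\]
so that $u\in H^1(\Omega)$ is a weak solution of (\ref{eq16}) exactly when $V(u)=\vartheta$ in $H^1(\Omega)^*$ (recall $\vartheta\in L^{\infty}(\Omega)\hookrightarrow L^2(\Omega)\hookrightarrow H^1(\Omega)^*$).

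First I would verify that $V$ is pseudomonotone. By Proposition \ref{prop2}, $\hat{a}$ is pseudomonotone, and it is bounded. The perturbation $u\mapsto u+N_{\varphi'_{\lambda}}(u)$, regarded as a map from $H^1(\Omega)$ into $H^1(\Omega)^*$ through the continuous embeddings $H^1(\Omega)\hookrightarrow L^2(\Omega)\hookrightarrow H^1(\Omega)^*$, is everywhere defined, bounded, continuous and monotone (the latter since $\varphi'_{\lambda}$ is nondecreasing), hence maximal monotone and therefore pseudomonotone; as a sum of two bounded pseudomonotone maps, $V$ is pseudomonotone. Next, using $H(a)$ and $\varphi'_{\lambda}(x)x\ge0$,
\[
\langle V(u),u\rangle=\int_{\Omega}a(u)|Du|^2\,dz+\|u\|_2^2+\int_{\Omega}\varphi'_{\lambda}(u)\,u\,dz\ge\min\{c_1,1\}\,\|u\|^2,
\]
so that $\langle V(u),u\rangle/\|u\|\to+\infty$ as $\|u\|\to\infty$; hence $V$ is strongly coercive. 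By the surjectivity result for pseudomonotone, strongly coercive operators recalled in Section 2, there is $u\in H^1(\Omega)$ with $V(u)=\vartheta$, i.e. a weak solution of (\ref{eq16}).

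For the regularity claim I would rewrite the identity $V(u)=\vartheta$ as $\hat{a}(u)=N_g(u)$ with $g(z,x)=\vartheta(z)-x-\varphi'_{\lambda}(x)$, a Carath\'eodory function obeying $|g(z,x)|\le a_0(z)(1+|x|)$ for some $a_0\in L^{\infty}(\Omega)_+$; thus $H(g)$ holds with $r=2$, and Proposition \ref{prop3} gives $u\in L^{\infty}(\Omega)$ (the trivial solution, if it occurs, being obviously in $C^1(\overline{\Omega})$). Then $z\mapsto a(u(z))$ is measurable with $c_1\le a(u)\le c_2$ and the right-hand side $\vartheta-u-\varphi'_{\lambda}(u)$ lies in $L^{\infty}(\Omega)$, so the De Giorgi--Nash--Moser theory for divergence-form equations with bounded measurable coefficients (here $\partial\Omega$ is $C^2$) yields $u\in C^{0,\alpha}(\overline{\Omega})$ for some $\alpha\in(0,1)$. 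Since $a$ is Lipschitz, $z\mapsto a(u(z))$ is then H\"{o}lder continuous on $\overline{\Omega}$; the equation now has H\"{o}lder continuous principal part and an $L^q(\Omega)$ ($q>N$) right-hand side, and linear Schauder-type regularity for the Neumann problem upgrades this to $u\in C^{1,\alpha}(\overline{\Omega})\subseteq C^1(\overline{\Omega})$. I expect this regularity chain to be the main obstacle: each link is standard, but one must correctly combine the Morse-iteration bound of Proposition \ref{prop3}, De Giorgi--Nash H\"{o}lder continuity (needed to turn the coefficient $a(u(\cdot))$ into a H\"{o}lder function) and boundary Schauder estimates adapted to the Neumann condition.

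Finally, for uniqueness, let $u_1,u_2$ be two weak solutions and set $\beta(t)=\int_0^t a(s)\,ds$, which by $H(a)$ is Lipschitz, strictly increasing and satisfies $\beta(0)=0$; hence $\beta(u_i)\in H^1(\Omega)$ with $D\beta(u_i)=a(u_i)Du_i$. Subtracting the two weak formulations and choosing $h=\beta(u_1)-\beta(u_2)$ as test function gives
\begin{align*}
&\int_{\Omega}\bigl|a(u_1)Du_1-a(u_2)Du_2\bigr|^2\,dz+\int_{\Omega}(u_1-u_2)\bigl(\beta(u_1)-\beta(u_2)\bigr)\,dz\\
&\qquad{}+\int_{\Omega}\bigl(\varphi'_{\lambda}(u_1)-\varphi'_{\lambda}(u_2)\bigr)\bigl(\beta(u_1)-\beta(u_2)\bigr)\,dz=0.
\end{align*}
All three integrands are nonnegative almost everywhere (the last two because $\beta$ and $\varphi'_{\lambda}$ are nondecreasing, so each factor has the same sign as $u_1-u_2$), hence each vanishes a.e.; in particular $(u_1-u_2)\bigl(\beta(u_1)-\beta(u_2)\bigr)=0$ a.e., and the strict monotonicity of $\beta$ forces $u_1=u_2$.
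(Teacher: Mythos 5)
Your proposal is correct, and while the existence and $L^\infty$/regularity steps follow essentially the paper's route (surjectivity of the pseudomonotone, strongly coercive map $V_\lambda(u)=\hat a(u)+u+N_{\varphi'_\lambda}(u)$, then Proposition \ref{prop3} applied to $g(z,x)=\vartheta(z)-x-\varphi'_\lambda(x)$ with $r=2$), your uniqueness argument is genuinely different and substantially simpler than the paper's. The paper tests the difference of the two equations with the truncated functions $\eta_\epsilon(u-v)$ and $\gamma_\epsilon(u-v)$, works on connected components of $\{u>v\}$, and lets $\epsilon\rightarrow0^+$ to force these components to be Lebesgue-null via the Poincar\'e inequality; this requires the solutions to already be in $C^1(\overline\Omega)$ (to know $u=v$ on the boundary of each component) and is rather delicate. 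Your Kirchhoff-transform test function $h=\beta(u_1)-\beta(u_2)$ with $\beta(t)=\int_0^t a(s)\,ds$ exploits the fact that ${\rm div}\,(a(u)Du)=\Delta\beta(u)$, so the problem is monotone in the variable $w=\beta(u)$: the three resulting integrands $|a(u_1)Du_1-a(u_2)Du_2|^2$, $(u_1-u_2)(\beta(u_1)-\beta(u_2))\geq c_1(u_1-u_2)^2$ and $(\varphi'_\lambda(u_1)-\varphi'_\lambda(u_2))(\beta(u_1)-\beta(u_2))$ are all nonnegative and sum to zero, giving $u_1=u_2$ directly. This buys you uniqueness in the full class of $H^1(\Omega)$ weak solutions (not only among $C^1(\overline\Omega)$ solutions), with no boundary or connectedness considerations; the only ingredients are the Lipschitz chain rule (Marcus--Mizel, already used in the paper) and $a\geq c_1>0$. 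One presentational caveat in your regularity chain: with only $L^q$ ($q>N$) right-hand side, ``Schauder-type'' estimates do not literally apply (they require H\"older data); you should instead cite gradient H\"older estimates for quasilinear Neumann problems as the paper does (Lieberman, Theorem 2), or note that your own transform $w=\beta(u)$ reduces the equation to $-\Delta w\in L^\infty(\Omega)$ with Neumann condition, so linear $W^{2,q}$ theory gives $w\in C^{1,\alpha}(\overline\Omega)$ and hence $u=\beta^{-1}(w)\in C^{1,\alpha}(\overline\Omega)$, since $\beta^{-1}$ is $C^{1,1}$. With that adjustment the argument is complete.
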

\begin{proof}
	Let $V_{\lambda}:H^1(\Omega)\rightarrow H^1(\Omega)^*$ be the nonlinear map defined by
	$$V_{\lambda}(u)=\hat{a}(u)+u+N_{\varphi'_{\lambda}}(u)\ \mbox{for all}\ u\in H^1(\Omega).$$
	
	As before $N_{\varphi'_{\lambda}}(u)$ is the Nemytskii map corresponding to $\varphi'_{\lambda}$ (that is, $N_{\varphi'_{\lambda}}(u)(\cdot)=\varphi'_{\lambda}(u(\cdot))$). We have
	\begin{eqnarray}\label{eq17}
		\left\langle V_{\lambda}(u),u\right\rangle&=&\left\langle \hat{a}(u),u\right\rangle+||u||^2_2+\int_{\Omega}\varphi'_{\lambda}(u)udz\nonumber\\
		&\geq&c_1||Du||^2_2+||u||^2_2\nonumber\\
		&&(\mbox{see hypothesis $H(a)$ and recall that}\ \varphi'_{\lambda}\ \mbox{is increasing,}\ \varphi'_{\lambda}(0)=0),\nonumber\\
		\Rightarrow&&V_{\lambda}\ \mbox{is strongly coercive}.
	\end{eqnarray}
	
	Using the Sobolev embedding theorem we see that $u\mapsto N_{\varphi'_{\lambda}}(u)$ is completely continuous from $H^1(\Omega)$ into $H^1(\Omega)^*$ (that is, if $u_n\stackrel{w}{\rightarrow}u$ in $H^1(\Omega)$, then $N_{\varphi'_{\lambda}}(u_n)\rightarrow N_{\varphi'_{\lambda}}(u)$ in $H^1(\Omega)^*$), hence it is pseudomonotone. From Proposition \ref{prop2} we know that $\hat{a}(\cdot)$ is pseudomonotone and of course the same is true for the embedding $H^1(\Omega)\hookrightarrow H^1(\Omega)^*$ (which is compact). So, from Gasinski and Papageorgiou \cite{7}, Proposition 3.2.51, p. 334, we infer that
	\begin{equation}\label{eq18}
		u\mapsto V_{\lambda}(u)\ \mbox{is pseudomonotone}.
	\end{equation}
	
	Recall that a pseudomonotone strongly coercive map is surjective. So, from (\ref{eq17}), (\ref{eq18}) it follows that there exists $u\in H^1(\Omega)$ such that
	\begin{eqnarray}\label{eq19}
		&&V_{\lambda}(u)=\vartheta,\nonumber\\
		&\Rightarrow&\int_{\Omega}a(u)(Du,Dh)_{\RR^N}dz+\int_{\Omega}uhdz+\int_{\Omega}\varphi'_{\lambda}(u)hdz=\int_{\Omega}\vartheta hdz\ \mbox{for all}\ h\in H^1(\Omega).
	\end{eqnarray}
	
	From the nonlinear Green's identity (see Gasinski and Papageorgiou \cite{7}, Theorem 2.4.53, p. 210), we have
	\begin{equation}\label{eq20}
		\int_{\Omega}a(u)(Du,Dh)_{\RR^N}dz=\left\langle -{\rm div}\,(a(u)Du),h\right\rangle+\left\langle a(u)\frac{\partial u}{\partial n},h\right\rangle_{\partial\Omega}\ \mbox{for all}\ h\in H^1(\Omega),
	\end{equation}
	where by $\left\langle \cdot,\cdot\right\rangle_{\partial\Omega}$ we denote the duality brackets for the pair $(H^{-\frac{1}{2},2}(\partial\Omega),H^{\frac{1}{2},2}(\partial\Omega))$.
	
	From the representation theorem for the elements of $H^{-1}(\Omega)=H^1_0(\Omega)^*$ (see Gasinski and Papageorgiou \cite{7}, Theorem 2.4.57, p. 212), we have
	$${\rm div}\,(a(u)Du)\in H^{-1}(\Omega).$$
	
	So, if by $\left\langle \cdot,\cdot\right\rangle_0$ we denote the duality brackets for the pair $(H^{-1}(\Omega),H^1_0(\Omega))$ we have
	\begin{eqnarray}\label{eq21}
		&&\left\langle -{\rm div}\,(a(u)Du),h\right\rangle_0=\int_{\Omega}a(u)(Du,Dh)_{\RR^N}dz\ \mbox{for all}\ h\in H^1_0(\Omega),\nonumber\\
		&\Rightarrow&\left\langle -{\rm div}\,(a(u)Du),h\right\rangle_0=\int_{\Omega}(\vartheta-u-\varphi'_{\lambda}(u))hdz\ \mbox{for all}\ h\in H^1_0(\Omega)\ (\mbox{see (\ref{eq19})}),\nonumber\\
		&\Rightarrow&-{\rm div}\,(a(u(z))Du(z))=\vartheta(z)-u(z)-\varphi'_{\lambda}(u(z))\ \mbox{for almost all}\ z\in\Omega .
	\end{eqnarray}
	
	Then from (\ref{eq19}), (\ref{eq20}), (\ref{eq21}) it follows that
	\begin{equation}\label{eq22}
		\left\langle a(u)\frac{\partial u}{\partial n},h\right\rangle_{\partial\Omega}=0\ \mbox{for all}\ h\in H^1(\Omega).
	\end{equation}
	
	If by $\gamma_0$ we denote the trace map, we recall that
	$${\rm im}\,\gamma_0=H^{\frac{1}{2},2}(\partial\Omega)$$
	(see Gasinski and Papageorgiou \cite{7}, Theorem 2.4.50, p. 209). Hence from (\ref{eq22}) we infer that
	$$\left.\frac{\partial u}{\partial n}\right|_{\partial\Omega}=0\ (\mbox{see hypothesis $H(a)$}).$$
	
	Therefore we have
	\begin{equation}\label{eq23}
		\left\{\begin{array}{ll}
			-{\rm div}\,(a(u)(z)Du(z))+u(z)+\varphi'_{\lambda}(u(z))=\vartheta(z)&\mbox{for almost all}\ z\in\Omega,\\
			\frac{\partial u}{\partial n}=0&\mbox{on}\ \partial\Omega
		\end{array}\right\}
	\end{equation}
	
	From (\ref{eq23}) and Proposition \ref{prop3}, we infer that
	$$u\in L^{\infty}(\Omega).$$
	
	Then we can use Theorem 2 of Lieberman \cite{13} and conclude that
	$$u\in C^1(\overline{\Omega}).$$
	
 We establish in what follows the uniqueness of this solution. So, suppose that $v\in C^1(\overline{\Omega})$ is another solution. We have
	\begin{eqnarray}
		&&\hat{a}(u)+u+N_{\varphi'_{\lambda}}(u)=\vartheta\ \mbox{in}\ H^1(\Omega)^*,\label{eq24}\\
		&&\hat{a}(v)+v+N_{\varphi'_{\lambda}}(v)=\vartheta\ \mbox{in}\ H^1(\Omega)^*.\label{eq25}
	\end{eqnarray}
	
	Let $k>0$ be the Lipschitz constant in hypothesis $H(a)$. We introduce the following function
	\begin{eqnarray}\label{eq26}
		\eta_{\epsilon}(s)=\left\{\begin{array}{ll}
			\int^s_{\epsilon}\frac{dt}{(kt)^2}&\mbox{is}\ s\geq\epsilon\\
			0&\mbox{if}\ s<\epsilon
		\end{array}\right.\ \mbox{with}\ \epsilon>0.
	\end{eqnarray}
	
	Evidently $\eta_{\epsilon}$ is Lipschitz continuous. So, from Marcus and Mizel \cite{15}, we have
	\begin{eqnarray}
		&&\eta_{\epsilon}(u-v)\in H^1(\Omega),\label{eq27}\\
		&&D(\eta_{\epsilon}(u-v))=\eta'_{\epsilon}(u-v)D(u-v)\label{eq28}
	\end{eqnarray}
	(see also Gasinski and Papageorgiou \cite{7}, Proposition 2.4.25, p. 195). Subtracting (\ref{eq25}) from (\ref{eq24}), we have
	\begin{eqnarray}\label{eq29}
		\hat{a}(u)-\hat{a}(v)+(u-v)+(N_{\varphi'_{\lambda}}(u)-N_{\varphi'_{\lambda}(v)})=0\ \mbox{in}\ H^1(\Omega)^*.
	\end{eqnarray}
	
	On (\ref{eq29}) we act with $\eta_{\epsilon}(u-v)\in H^1(\Omega)$ (see (\ref{eq27})). Then
	\begin{equation}\label{eq30}
		\left\langle \hat{a}(u)-\hat{a}(v),\eta_{\epsilon}(u-v)\right\rangle+\int_{\Omega}(u-v)\eta_{\epsilon}(u-v)dz+
\int_{\Omega}(\varphi'_{\lambda}(u)-\varphi'_{\lambda}(v))(u-v)dz=0.
	\end{equation}
	
	We have
	\begin{equation}\label{eq31}
		\int_{\Omega}(u-v)\eta_{\epsilon}(u-v)dz=\int_{\{u-v\geq\epsilon\}}(u-v)\eta_{\epsilon}(u-v)dz\geq
\frac{1}{k}\int_{\{u-v\geq\epsilon\}}\left(\frac{u-v}{\epsilon}-1\right)dz\ (\mbox{see (\ref{eq26})}).
	\end{equation}
	
	Recall that $\varphi'_{\lambda}$ is increasing. Therefore
	\begin{eqnarray}\label{eq32}
		&&\int_{\Omega}(\varphi'_{\lambda}(u)-\varphi'_{\lambda}(v))\eta_{\epsilon}(u-v)dz=\int_{\{u-v\geq\epsilon\}}(\varphi'_{\lambda}(u)-\varphi'_{\lambda}(v))\eta_{\epsilon}(u-v)dz\geq 0\\
		&&\mbox{(see (\ref{eq26}))}.\nonumber
	\end{eqnarray}
	
	We return to (\ref{eq30}) and use (\ref{eq31}), (\ref{eq32}). Then
	\begin{eqnarray}\label{eq33}
		&&\left\langle \hat{a}(u)-\hat{a}(v),\eta_{\epsilon}(u-v)\right\rangle\leq 0,\nonumber\\
		&\Rightarrow&\int_{\Omega}(a(u)Du-a(v)Dv,D\eta_{\epsilon}(u-v))_{\RR^N}dz\leq 0,\nonumber\\
		&\Rightarrow&\int_{\Omega}a(u)(Du-Dv,D\eta_{\epsilon}(u-v))_{\RR^N}dz\leq-
\int_{\Omega}(a(u)-a(v))(Dv,D\eta_{\epsilon}(u-v))_{\RR^N}dz.
	\end{eqnarray}
	
	Let $\Omega_{\epsilon}=\{z\in\Omega:(u-v)(z)\geq\epsilon\}$. Then
	\begin{eqnarray}\label{eq34}
		&&\int_{\Omega}a(u)(Du-Dv,D\eta_{\epsilon}(u-v))_{\RR^N}dz\nonumber\\
		&&=\int_{\Omega_{\epsilon}}a(u)\eta'_{\epsilon}(u-v)|Du-Dv|^2dz\ (\mbox{see (\ref{eq26}), (\ref{eq28})})\nonumber\\
		&&\geq c_1\int_{\Omega_{\epsilon}}\frac{|Du-Dv|^2}{k^2(u-v)^2}dz\ (\mbox{see hypothesis $H(a)$ and (\ref{eq26})}).
	\end{eqnarray}
	
	Also we have
	\begin{eqnarray}\label{eq35}
		&&-\int_{\Omega}(a(u)-a(v))(Dv,D\eta_{\epsilon}(u-v))_{\RR^N}dz\nonumber\\
		&&\leq\int_{\Omega_{\epsilon}}k(u-v)\eta'_{\epsilon}(u-v)(Dv,Du-Dv)_{\RR^N}dz\ (\mbox{see hypothesis $H(a)$ and (\ref{eq28})})\nonumber\\
		&&=\int_{\Omega_{\epsilon}}\frac{1}{k(u-v)}(Dv,Du-Dv)_{\RR^N}dz\ (\mbox{see (\ref{eq26})})\nonumber\\
		&&\leq||Dv||_2\left(\int_{\Omega_{\epsilon}}\frac{|Du-Dv|^2}{k^2|u-v|^2}dz\right)^{1/2}\ (\mbox{by the Cauchy-Schwarz inequality}).
	\end{eqnarray}
	
	Returning to (\ref{eq33}) and using (\ref{eq34}), (\ref{eq35}) we obtain
	$$\int_{\Omega_{\epsilon}}\frac{|Du-Dv|^2}{|u-v|^2}dz\leq\frac{k^2}{c_1^2}||Dv||^2_2.$$
	
	Let $\Omega^*_{\epsilon}$ be a connected component of $\hat\Omega=\{z\in\Omega;\ (u-v)(z)>0\}$, $\hat\Omega\not=\Omega$ (see \eqref{eq31}). We have
	\begin{equation}\label{eq36}
		\int_{\Omega^*_{\epsilon}}\frac{|Du-Dv|^2}{|u-v|^2}dz\leq\frac{k^2}{c_1^2}||Dv||^2_2\quad\mbox{with $\Omega^*_{\epsilon}=\Omega_\epsilon\cap\Omega^*$}\,.
	\end{equation}
	
	Consider the function
	\begin{eqnarray}\label{eq37}
		\gamma_{\epsilon}(y)=\left\{\begin{array}{ll}
			\int^y_{\epsilon}\frac{dt}{t}&\mbox{if}\ t\geq\epsilon\\
			0&\mbox{if}\ t<\epsilon .
		\end{array}\right.
	\end{eqnarray}
	
	This function is Lipschitz continuous and as before from Marcus and Mizel \cite{15}, we have
	\begin{eqnarray}
		&&\gamma_{\epsilon}(u-v)\in H^1(\Omega)\label{eq38}\\
		&&D\gamma_{\epsilon}(u-v)=\gamma'_{\epsilon}(u-v)(Du-Dv)=\frac{1}{u-v}(Du-Dv)\ \mbox{for almost all}\ z\in\Omega_{\epsilon}\ (\mbox{see (\ref{eq37})})\label{eq39}.
	\end{eqnarray}
	
	Returning to (\ref{eq36}) and using (\ref{eq38}), (\ref{eq39}), we obtain
	\begin{equation}\label{eq40}
		\int_{\Omega^*}|D\gamma_{\epsilon}(u-v)|^2dz\leq\frac{k^2}{c_1^2}||Dv||^2_2\,.
	\end{equation}
	
	Note that $u=v$ on $\partial\Omega^*$ (that is, $u-v\in H^1_0(\Omega^*)$; recall that $u,v\in C^1(\overline{\Omega})$). Hence
	\begin{equation}\label{eq41}
		\gamma_{\epsilon}(u-v)\in H^1_0(\Omega^*).
	\end{equation}
	
	From (\ref{eq40}), (\ref{eq41}) and the Poincar\'e inequality, we have
	$$\int_{\Omega^*}|\gamma_{\epsilon}(u-v)|^2dz\leq c_8||v||^2\ \mbox{for some}\ c_8>0,\ \mbox{all}\ \epsilon>0.$$
	
	If $|\Omega^*|_N>0$ (by $|\cdot|_N$ we denote the Lebesgue measure on $\RR^N$), then letting $\epsilon\rightarrow 0^+$, we reach a contradiction (see (\ref{eq37})). So, every connected component of the open set
	$$\hat{\Omega}=\{z\in\Omega:u(z)>v(z)\}$$
	is Lebesgue-null. Hence $|\hat{\Omega}|_N=0$ and so
	\begin{equation}\label{eq42}
		u\leq v.
	\end{equation}
	
	Interchanging the roles of $u,v$ in the above argument, we also obtain
	\begin{equation}\label{eq43}
		v\leq u.
	\end{equation}
	
	From (\ref{eq42}) and (\ref{eq43}) we conclude that
	$$u=v.$$
	
	This prove the uniqueness of the solution $u\in C^1(\overline{\Omega})$ of the auxiliary problem (\ref{eq16}).
\end{proof}

Let $C^1_n(\overline{\Omega})=\{u\in C^1(\overline{\Omega}):\frac{\partial u}{\partial n}|_{\partial\Omega}=0\}$ and for every $\lambda>0$ let $K_{\lambda}:L^{\infty}(\Omega)\rightarrow C^1_n(\overline{\Omega})$
be the map which to each $\vartheta\in L^\infty(\Omega)$ assigns the unique solution $u=K_\lambda(\vartheta)\in C^1_n(\overline{\Omega})$
of the auxiliary problem (\ref{eq16}) (see Proposition \ref{prop4}). The next proposition establishes the continuity properties of this map.

\begin{prop}\label{prop5}
	If hypotheses $H(a),H(\varphi)$ hold then the map $K_{\lambda}:L^{\infty}(\Omega)\rightarrow C^1_n(\overline{\Omega})$ is sequentially continuous from $L^{\infty}(\Omega)$ furnished with the $w^*$-topology into $C^1_n(\overline{\Omega})$ with the norm topology.
\end{prop}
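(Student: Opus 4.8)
The plan is to take a sequence $\vartheta_n \stackrel{w^*}{\rightarrow} \vartheta$ in $L^\infty(\Omega)$ and show that $u_n := K_\lambda(\vartheta_n) \rightarrow u := K_\lambda(\vartheta)$ in $C^1_n(\overline{\Omega})$. First I would note that a $w^*$-convergent sequence in $L^\infty(\Omega)$ is norm-bounded, say $\|\vartheta_n\|_\infty \le c$ for all $n$; moreover each $\vartheta_n$ is bounded in every $L^p(\Omega)$ with a uniform bound. Testing the weak formulation (\ref{eq19}) for $u_n$ with $h = u_n$ and using (\ref{eq17}) gives $c_1\|Du_n\|_2^2 + \|u_n\|_2^2 \le \langle V_\lambda(u_n), u_n\rangle = \int_\Omega \vartheta_n u_n\, dz \le \|\vartheta_n\|_2\|u_n\|_2$, which yields a uniform bound $\|u_n\| \le c_9$ in $H^1(\Omega)$. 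Then, exactly as in the proof of Proposition \ref{prop4}, problem (\ref{eq23}) holds for each $u_n$ with right-hand side $\vartheta_n - u_n - \varphi'_\lambda(u_n)$; since $\varphi'_\lambda$ is Lipschitz with $\varphi'_\lambda(0)=0$ we have $|\varphi'_\lambda(u_n)| \le \frac{1}{\lambda}|u_n|$, so the datum is bounded in $L^2(\Omega)$, and Proposition \ref{prop3} (applied with the Carath\'eodory function $g(z,x) = \vartheta_n(z) - x - \varphi'_\lambda(x)$, whose bounding constant depends only on $\|\vartheta_n\|_\infty$, $\lambda$, $N$ and $\|u_n\|_{2^*}$, all uniformly controlled) gives $\|u_n\|_\infty \le c_{10}$ uniformly in $n$. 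With the $L^\infty$-bound in hand, the global regularity result of Lieberman \cite{13} provides a uniform estimate $\|u_n\|_{C^{1,\beta}(\overline{\Omega})} \le c_{11}$ for some $\beta \in (0,1)$.

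Next I would extract the limit. By the compact embedding $C^{1,\beta}(\overline{\Omega}) \hookrightarrow C^1(\overline{\Omega})$, passing to a subsequence we get $u_n \rightarrow \tilde{u}$ in $C^1_n(\overline{\Omega})$ (the Neumann condition passes to the limit since $C^1$-convergence carries the boundary derivative). I then want to identify $\tilde{u}$ with $u = K_\lambda(\vartheta)$. Passing to the limit in the weak formulation
\begin{equation*}
\int_\Omega a(u_n)(Du_n, Dh)_{\RR^N}\, dz + \int_\Omega u_n h\, dz + \int_\Omega \varphi'_\lambda(u_n) h\, dz = \int_\Omega \vartheta_n h\, dz
\end{equation*}
for fixed $h \in H^1(\Omega)$: the first term converges because $a(u_n) \rightarrow a(\tilde{u})$ uniformly (continuity of $a$ and uniform convergence of $u_n$) and $Du_n \rightarrow D\tilde{u}$ in $L^2(\Omega,\RR^N)$; the second converges by $L^2$-convergence; the third by continuity of $\varphi'_\lambda$ together with uniform convergence of $u_n$; the right-hand side converges precisely by the $w^*$-convergence $\vartheta_n \stackrel{w^*}{\rightarrow}\vartheta$ (here $h \in H^1(\Omega) \subseteq L^1(\Omega)$ is an admissible test element against $L^\infty$). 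Hence $\tilde{u}$ solves (\ref{eq16}) with datum $\vartheta$, and by the uniqueness assertion of Proposition \ref{prop4} we get $\tilde{u} = K_\lambda(\vartheta) = u$.

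Finally, since the limit $u$ is the same for every subsequence, a standard Urysohn-type subsequence argument (every subsequence of $\{u_n\}$ has a further subsequence converging in $C^1_n(\overline{\Omega})$ to the fixed limit $u$) shows that the whole sequence $u_n \rightarrow u$ in $C^1_n(\overline{\Omega})$, which is the claimed sequential continuity.

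I expect the main obstacle to be the uniform $L^\infty$- and $C^{1,\beta}$-bounds, i.e.\ making sure that the constant produced by Proposition \ref{prop3} and by Lieberman's theorem does not degenerate along the sequence. This requires checking carefully that the only data those estimates depend on --- $\|\vartheta_n - u_n - \varphi'_\lambda(u_n)\|_\infty$ through $\|a\|_\infty$, $N$, and the Sobolev norm $\|u_n\|_{2^*}$ --- are all bounded uniformly in $n$; the Sobolev norm is controlled by the uniform $H^1$-bound via the embedding $H^1(\Omega) \hookrightarrow L^{2^*}(\Omega)$, and the $L^\infty$-norm of the datum by the $H^1$-bound plus the Lipschitz bound on $\varphi'_\lambda$ (which is where $\lambda$ enters, fixed throughout). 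Everything else --- the passage to the limit and the use of uniqueness --- is routine once these compactness estimates are secured.
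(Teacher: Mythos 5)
Your proof is correct and follows essentially the same route as the paper: uniform $H^1$-bound by testing with $u_n$, uniform $L^\infty$-bound via Proposition \ref{prop3}, uniform $C^{1,\alpha}$-bound via Lieberman, compact embedding into $C^1(\overline{\Omega})$, and identification of the limit through the uniqueness statement of Proposition \ref{prop4}. The only cosmetic difference is the order of operations (you extract $C^1$-compactness first and pass to the limit with strong convergence, while the paper passes to the limit using weak $H^1$-convergence before upgrading to $C^1$), and you make explicit the subsequence principle that the paper leaves implicit.
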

\begin{proof}
	Suppose that $\vartheta_n\stackrel{w^*}{\rightarrow}\vartheta$ in $L^{\infty}(\Omega)$ and let $u_n=K_{\lambda}(\vartheta_n)$, $u=K_{\lambda}(\vartheta)$.
	
	For every $n\in\NN$, we have
	\begin{eqnarray}
		&&\hat{a}(u_n)+u_n+N_{\varphi'_{\lambda}}(u_n)=\vartheta_n\label{eq44}\\
		&\Rightarrow&-{\rm div}\,(a(u_n(z))Du_n(z))+u_n(z)+\varphi'_{\lambda}(u_n(z))=\vartheta_n(z)\nonumber\\
		&&\hspace{1cm}\mbox{for almost all}\ z\in\Omega,\ \frac{\partial u_n}{\partial n}=0\ \mbox{on}\ \partial\Omega .\label{eq45}
	\end{eqnarray}
	
	On (\ref{eq44}) we act with $u_n\in C^1_n(\overline{\Omega})$. Then
	\begin{eqnarray*}
		&&\int_{\Omega}a(u_n)|Du_n|^2dz+||u_n||^2_2+\int_{\Omega}\varphi'_{\lambda}(u_n)u_ndz=\int_{\Omega}\vartheta_nu_ndz\\
		&\Rightarrow&c_1||Du_n||^2_2+||u_n||^2_2\leq c_9||u_n||\ \mbox{for some}\ c_9>0,\ \mbox{all}\ n\in\NN\\
		&&(\mbox{see hypothesis $H(a)$ and recall that}\ \varphi'_{\lambda}\ \mbox{is increasing with}\ \varphi'_{\lambda}(0)=0)\\
		&\Rightarrow&||u_n||\leq c_{10}\ \mbox{for some}\ c_{10}>0,\ \mbox{all}\ n\in\NN,\\
		&\Rightarrow&\{u_n\}_{n\geq 1}\subseteq H^1(\Omega)\ \mbox{is bounded}.
	\end{eqnarray*}
	
	By passing to a subsequence if necessary, we may assume that
	\begin{equation}\label{eq46}
		u_n\stackrel{w}{\rightarrow}\hat{u}\ \mbox{in}\ H^1(\Omega)\ \mbox{and}\ u_n\rightarrow\hat{u}\ \mbox{in}\ L^2(\Omega).
	\end{equation}
	
	Then for every $h\in H^1(\Omega)$ we have
	\begin{eqnarray}\label{eq47}
		&&\left\langle \hat{a}(u_n),h\right\rangle=\int_{\Omega}a(u_n)(Du_n,Dh)_{\RR^N}dz\rightarrow\int_{\Omega}a(\hat{u})(D\hat{u},Dh)_{\RR^N}dz=\left\langle \hat{a}(\hat{u}),h\right\rangle\nonumber\\
		&&\hspace{6cm}(\mbox{see (\ref{eq46}) and hypothesis $H(a)$}),\nonumber\\
		&\Rightarrow&\hat{a}(u_n)\stackrel{w}{\rightarrow}\hat{a}(\hat{u})\ \mbox{in}\ H^1(\Omega)^*.
	\end{eqnarray}
	
	Therefore, if in (\ref{eq44}) we pass to the limit as $n\rightarrow\infty$ and use (\ref{eq46}), (\ref{eq47}), then
	\begin{eqnarray*}
		&&\hat{a}(\hat{u})+\hat{u}+N_{\varphi'_{\lambda}}(\hat{u})=\vartheta,\\
		&\Rightarrow&\hat{u}=u\in C^1(\overline{\Omega})=\mbox{the unique solution of (\ref{eq16}) (see Proposition \ref{prop4})}.
	\end{eqnarray*}
	
	From (\ref{eq45}) and Proposition \ref{prop3}, (recall that $\{u_n\}_{n\geq 1}\subseteq H^1(\Omega)$ is bounded), we see that we can find $c_{11}>0$ such that
	\begin{equation}\label{eq48}
		||u_n||_{\infty}\leq c_{11}\ \mbox{for all}\ n\in\NN.
	\end{equation}
	
	Then (\ref{eq48}) and Theorem 2 of Lieberman \cite{13} imply that we can find $\alpha\in(0,1)$ and $c_{12}>0$ such that
	\begin{equation}\label{eq49}
		u_n\in C^{1,\alpha}(\overline{\Omega}),\ ||u_n||_{C^{1,\alpha}(\overline{\Omega})}\leq c_{12}\ \mbox{for all}\ n\in\NN.
	\end{equation}
	
	From (\ref{eq49}), the compact embedding of $C^{1,\alpha}(\overline{\Omega})$ into $C^1(\overline{\Omega})$ and (\ref{eq46}), we have
	\begin{eqnarray*}
		&&u_n\rightarrow u\ \mbox{in}\ C^1(\overline{\Omega}),\\
		&\Rightarrow&K_{\lambda}(\vartheta_n)\rightarrow K_{\lambda}(\vartheta)\ \mbox{in}\ C^1(\overline{\Omega}).
	\end{eqnarray*}
	
	This proves that $K_{\lambda}$ is sequentially continuous from $L^{\infty}(\Omega)$ with the $w^*$-topology into $C^1_n(\overline{\Omega})$ with the norm topology.
\end{proof}

We consider the following approximation to problem (\ref{eq1}):
\begin{eqnarray}\label{eq50}
	\left\{\begin{array}{ll}
		{\rm div}\,(a(u(z))Du(z))\in\varphi'_{\lambda}(u(z))+F(z,u(z),Du(z))&\mbox{in}\ \Omega,\\
		\frac{\partial u}{\partial n}=0&\mbox{on}\ \partial\Omega,\lambda>0.
	\end{array}\right\}
\end{eqnarray}
\begin{prop}\label{prop6}
	If hypotheses $H(a),H(\varphi),H(F)$ hold and $\lambda>0$, then problem (\ref{eq50}) admits a solution $u_{\lambda}\in C^1(\overline{\Omega})$.
\end{prop}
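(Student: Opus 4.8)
The strategy is to apply the multivalued alternative theorem (Theorem~\ref{th1}) with $X=C^1_n(\overline\Omega)$ and $V^*=L^\infty(\Omega)$ equipped with the $w^*$-topology, using $K=K_\lambda$ from Proposition~\ref{prop5} and a suitable selection multifunction $G$ built from $F$. First I would define $G:C^1_n(\overline\Omega)\to 2^{L^\infty(\Omega)}$ by
$$G(u)=\{u-f:f\in L^\infty(\Omega),\ f(z)\in F(z,u(z),Du(z))\ \text{for a.a.}\ z\in\Omega\},$$
so that a fixed point $u\in K_\lambda(G(u))$ solves $\hat a(u)+u+N_{\varphi'_\lambda}(u)=u-f$ with $f$ a selection of $F(\cdot,u,Du)$, i.e. $-\mathrm{div}\,(a(u)Du)=-\varphi'_\lambda(u)-f$, which is exactly problem~(\ref{eq50}). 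I would check that $G$ has nonempty (by graph measurability, $H(F)(i)$, and the Yankov–von Neumann–Aumann selection theorem, using the $C^1$-continuity of $u$ and $Du$ to make the composition measurable), closed, convex, $w^*$-compact values: the bound in $H(F)(iii)$ with the $C^1$-norm of $u$ controlling $|x|$ and $|\xi|$ puts all selections in a fixed ball of $L^\infty(\Omega)$, which is $w^*$-compact, and closedness/convexity of the values of $F$ transfer to $G$. Upper semicontinuity of $G$ from $C^1_n(\overline\Omega)$ into $L^\infty(\Omega)_{w^*}$ I would get from the closedness criterion stated in Section~2: if $u_n\to u$ in $C^1(\overline\Omega)$ and $w_n\in G(u_n)$ with $w_n\stackrel{w^*}\to w$, then $f_n=u_n-w_n$ lies in a fixed $L^\infty$-ball, so along a subsequence $f_n\stackrel{w^*}\to f$ in $L^\infty$ hence weakly in $L^2$; since $u_n\to u$, $Du_n\to Du$ uniformly and $F(z,\cdot,\cdot)$ has closed graph, Convergence Theorem arguments (à la Aubin–Cellina, using the convexity of the values) give $f(z)\in F(z,u(z),Du(z))$ a.e., so $w=u-f\in G(u)$; local compactness into $L^\infty_{w^*}$ is immediate from the uniform $L^\infty$-bound on $G(U)$ for bounded $U$.

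Next I would verify the compactness hypothesis of Theorem~\ref{th1}: if $B\subseteq C^1_n(\overline\Omega)$ is bounded, then $G(B)$ is a bounded (hence $w^*$-compact-closure) subset of $L^\infty(\Omega)$, and by Proposition~\ref{prop5} (together with the uniform a priori $C^{1,\alpha}$-bound from Proposition~\ref{prop3} and Lieberman's regularity theorem, as in~(\ref{eq48})–(\ref{eq49})) the image $K_\lambda(G(B))$ is bounded in $C^{1,\alpha}(\overline\Omega)$ for some $\alpha\in(0,1)$, hence relatively compact in $C^1_n(\overline\Omega)$ by the compact embedding $C^{1,\alpha}(\overline\Omega)\hookrightarrow C^1(\overline\Omega)$. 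Thus $S=K_\lambda\circ G$ maps bounded sets to relatively compact sets, and sequential continuity of $K_\lambda$ on $L^\infty_{w^*}$ is precisely Proposition~\ref{prop5}. Theorem~\ref{th1} then gives the dichotomy: either $S$ has a fixed point (and we are done) or the set $E=\{u\in C^1_n(\overline\Omega):u\in tS(u),\ t\in(0,1)\}$ is unbounded.

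The heart of the proof — and the step I expect to be the main obstacle — is ruling out the unbounded alternative by deriving an a priori bound on $E$. Take $u\in E$, so $u=tK_\lambda(w)$ with $w=u/t\in G(u)$, i.e. there is a selection $f(z)\in F(z,u(z),Du(z))$ with
$$-\mathrm{div}\,(a(u)Du)+\frac1t u+\frac1t\varphi'_\lambda(u)=\frac1t(u-f),$$
which simplifies to $-\mathrm{div}\,(a(u)Du)+\varphi'_\lambda(u)+f=0$ a.e.\ (the $u/t$ terms cancel), with Neumann boundary condition; note this is the approximate problem~(\ref{eq50}) itself, with $t$ only entering through $u=tK_\lambda(w)$. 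The plan is to show $\|u\|_\infty\le M$, with $M$ from $H(F)(iv)$, by a maximum-principle/Nagumo–Hartman argument: if $\|u\|_\infty>M$, pick $z_0$ where $|u|$ attains a value exceeding $M$; near $z_0$, $|u(z)-u(z_0)|+|Du(z)|$ can be made small (using $Du(z_0)=0$ at an interior max by $C^1$-regularity, or the Neumann condition at a boundary max), so $H(F)(iv)$ forces $f(z)u(z)+c_1|Du(z)|^2\ge\eta>0$ on a neighborhood — contradicting the pointwise consequence of the equation obtained by testing suitably (the standard device is to test the weak form with $(|u|-M)^+\,\mathrm{sgn}\,u$ or to integrate over the superlevel set, using $\varphi'_\lambda(u)u\ge0$ from $H(\varphi)$ and $H(a)$'s ellipticity). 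Once $\|u\|_\infty\le M$ independently of $t\in(0,1)$, testing the weak form of~(\ref{eq50}) with $u$ and using $H(F)(iii)$ (with $k=M$, so $\eta_{1,M},\eta_{2,M}\in L^\infty$), the ellipticity bound $c_1\|Du\|_2^2$, Young's inequality to absorb the $\eta_{2,M}|u||Du|$ term, and $\varphi'_\lambda(u)u\ge0$, yields $\|u\|_{H^1}\le C$ with $C$ independent of $t$ and $u\in E$; then Proposition~\ref{prop3} and Lieberman's theorem bound $\|u\|_{C^1}$ uniformly on $E$. Hence $E$ is bounded, the second alternative of Theorem~\ref{th1} is impossible, and $S$ has a fixed point $u_\lambda\in C^1_n(\overline\Omega)\subseteq C^1(\overline\Omega)$ solving~(\ref{eq50}). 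The technical subtlety to handle with care is the measurable-selection step for nonemptiness of $G(u)$ (composition with a merely $C^1$, not smooth, function) and the precise justification of the $L^\infty$-bound via $H(F)(iv)$ at a boundary maximum point, where one uses that $\partial u/\partial n=0$ on $\partial\Omega$.
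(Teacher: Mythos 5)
Your overall architecture is the same as the paper's: the multifunction $u\mapsto u-N(u)$ (your $G$), the solution map $K_\lambda$ of Proposition \ref{prop5}, Bader's alternative (Theorem \ref{th1}) on $X=C^1_n(\overline\Omega)$, an $L^\infty$ bound on $E$ via the Nagumo--Hartman hypothesis $H(F)(iv)$ at a maximum point, then an $H^1$ bound via $H(F)(iii),(v)$ and Lieberman's theorem. However, there is a genuine error at the heart of your a priori estimate: the claim that for $u\in E$ ``the $u/t$ terms cancel'' and $u$ solves problem (\ref{eq50}) itself. If $u\in tK_\lambda(G(u))$, then $K_\lambda(u-f)=\frac{1}{t}u$ for some selection $f$, so the equation satisfied is
\begin{equation*}
-\,\mathrm{div}\Bigl(a\bigl(\tfrac{1}{t}u\bigr)D\bigl(\tfrac{1}{t}u\bigr)\Bigr)+\tfrac{1}{t}u+\varphi'_{\lambda}\bigl(\tfrac{1}{t}u\bigr)=u-f,
\end{equation*}
equivalently $-\,\mathrm{div}\bigl(a(\tfrac{1}{t}u)Du\bigr)+t\varphi'_{\lambda}(\tfrac{1}{t}u)+tf=(t-1)u$. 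The coefficient is $a(\tfrac{1}{t}u)$, not $a(u)$; the penalization term is $t\varphi'_{\lambda}(\tfrac{1}{t}u)$, not $\varphi'_{\lambda}(u)$ (neither $a$ nor $\varphi'_\lambda$ is homogeneous); and there is a nonvanishing right-hand side $(t-1)u$. So elements of $E$ do \emph{not} satisfy (\ref{eq50}), and your bound on $E$, as written, is derived from the wrong equation. The estimate can be rescued — and this is exactly what the paper does in (\ref{eq52})--(\ref{eq59}) — because the extra terms have favorable signs for $t\in(0,1)$: $(t-1)u\cdot u\leq 0$ and $\varphi'_{\lambda}(\tfrac{1}{t}u)\,u\geq 0$ (since $\varphi'_\lambda$ is increasing with $\varphi'_\lambda(0)=0$), while $a(\tfrac{1}{t}u)\geq c_1$ still gives ellipticity; but these $t$-dependent terms must be carried through both the maximum-point argument and the gradient estimate, not cancelled away.

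A second, smaller imprecision: in the $L^\infty$ step, testing with $(|u|-M)^+\mathrm{sgn}\,u$ or integrating over the superlevel set is not obviously enough, because $H(F)(iv)$ only gives $vx+c_1|\xi|^2\geq\eta$ where \emph{both} $|x-x_0|$ and $|\xi|$ are small; on a superlevel set the gradient need not be small. The usable version is the purely local argument you also sketch (maximum point $z_0$ of $|u|^2$, $Du(z_0)=0$, continuity of $Du$), which the paper then exploits by integrating the pointwise inequality over a small ball and invoking the nonlinear Green's identity, keeping the $t$-dependent terms with their signs. So the route is the right one, but the key computation needs to be redone with the correct $t$-scaled equation before the rest of your plan (Theorem \ref{th1}, usc of $G$, compactness of $K_\lambda\circ G$, Lieberman) goes through as in the paper.
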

\begin{proof}
	Consider the multifunction $N:C^1_n(\overline{\Omega})\rightarrow 2^{L^{\infty}(\Omega)}$ defined by
	$$N(u)=\{f\in L^{\infty}(\Omega):f(z)\in F(z,u(z),Du(z))\ \mbox{for almost all}\ z\in\Omega\}.$$
	
	Hypotheses $H(F)(i),(ii)$ imply that the multifunction $z\mapsto F(z,u(z),Du(z))$ admits a measurable selection (see Hu and Papageorgiou \cite[p. 21]{11}) and then hypothesis $H(F)(iii)$ implies that this measurable selection belongs in $L^{\infty}(\Omega)$ and so $N(\cdot)$ has nonempty values, which is easy to see that they are $w^*$-compact (Alaoglu's theorem) and convex. Let
	$$N_1(u)=u-N(u)\ \mbox{for all}\ u\in C^1_n(\overline{\Omega}).$$
	
	We consider the following fixed point problem
	\begin{equation}\label{eq51}
		u\in K_{\lambda}N_1(u).
	\end{equation}
	
	Let $E=\{u\in C^1_n(\overline{\Omega}):u\in tK_{\lambda}N_1(u)\ \mbox{for some}\ t\in(0,1)\}$.
	\begin{claim}
		The set $E\subseteq C^1_n(\overline{\Omega})$ is bounded.
	\end{claim}
	
	Let $u\in E$. Then from the definitions of $K_{\lambda}$ and $N_1$ we have
	\begin{equation}\label{eq52}
		\hat{a}(\frac{1}{t}u)+\frac{1}{t}u+N_{\varphi'_{\lambda}}(\frac{1}{t}u)=u-f\ \mbox{with}\ f\in N(u).
	\end{equation}
	
	On (\ref{eq52}) we act with $u\in H^1(\Omega)$. Using hypothesis $H(a)$, we obtain
	\begin{eqnarray}\label{eq53}
		&&\frac{c_1}{t}||Du||^2_2+\frac{1}{t}||u||^2_2\leq||u||^2_2-\int_{\Omega}fudz\nonumber\\
		&&\hspace{1cm}(\mbox{recall that}\ \varphi'_{\lambda}\ \mbox{is increasing and}\ \varphi'_{\lambda}(0)=0),\nonumber\\
		&\Rightarrow&c_1||Du||^2_2\leq(t-1)||u||^2_2-t\int_{\Omega}fudz\leq-t\int_{\Omega}fudz\ (\mbox{recall that}\ t\in(0,1)).
	\end{eqnarray}
	
	Hypothesis $H(F)(v)$ implies that
	\begin{eqnarray}\label{eq54}
		-t\int_{\Omega}fudz\leq tc_3||u||^2_2+tc_4\int_{\Omega}|u|^2\,|Du|dz+\int_{\Omega}\gamma_3(z)|u|^2dz.
	\end{eqnarray}
	
	Let $M>0$ be as postulated by hypothesis $H(F)(iv)$. We will show that
	$$||u||_{\infty}\leq M.$$
	
	To this end let $\hat{\sigma}_0(z)=|u(z)|^2$. Let $z_0\in\overline{\Omega}$ be such that
	$$\hat{\sigma}_0(z_0)=\max\limits_{\overline{\Omega}}\hat{\sigma}_0\ (\mbox{recall that}\ u\in E\subseteq C^1_n(\overline{\Omega})).$$
	
	Suppose that $\hat{\sigma}_0(z_0)>M^2$. First assume that $z_0\in\Omega$. Then
	\begin{eqnarray*}
		&&0=D\hat{\sigma}_0(z_0)=2u(z_0)Du(z_0),\\
		&\Rightarrow&Du(z_0)=0\ (\mbox{since}\ |u(z_0)|>M).
	\end{eqnarray*}
	
	Let $\delta,\eta>0$ be as in hypothesis $H(F)(iv)$. Since $\hat{\sigma}_0(z_0)>M^2$ and $u\in C^1_n(\overline{\Omega})$ we can find $\delta_1>0$ such that
	\begin{eqnarray}\label{eq55}
		&&z\in\overline{B}_{\delta_1}(z_0)=\{z\in\Omega:|z-z_0|\leq\delta_1\}\Rightarrow|u(z)-u(z_0)|+|Du(z)|\leq\delta\nonumber\\
		&&\hspace{4cm}(\mbox{recall that}\ Du(z_0)=0),\nonumber\\
		&\Rightarrow&tf(z)u(z)+tc_1|Du(z)|^2\geq t\eta>0\ \mbox{for almost all}\ z\in\overline{B}_{\delta_1}(z_0)\\
		&&\hspace{4cm}(\mbox{see hypothesis}\ H(F)(iv)).\nonumber
	\end{eqnarray}
	
	From (\ref{eq52}) as before (see the proof of Proposition \ref{prop4}), we have
	\begin{equation}\label{eq56}
		-{\rm div}\,\left(a(\frac{1}{t}u(z))D(\frac{1}{t}u)(z)\right)+\varphi'_{\lambda}(\frac{1}{t}u(z))=(1-\frac{1}{t})u(z)-f(z)\ \mbox{for almost all}\ z\in\Omega .
	\end{equation}
	
	Using (\ref{eq56}) in (\ref{eq55}), we obtain
	\begin{eqnarray}\label{eq57}
		&&\left[{\rm div}\,\left(a(\frac{1}{t}u(z))Du(z)\right)-t\varphi'_{\lambda}(\frac{1}{t}u(z))+(t-1)u(z)\right]u(z)+tc_1|Du(z)|^2\geq t\eta\\
		&&\hspace{7cm}\mbox{for almost all}\ z\in\overline{B}_{\delta_1}(z_0).\nonumber
	\end{eqnarray}
	
	We integrate over $\overline{B}_{\delta_1}(z_0)$ and use the fact that $t\in(0,1)$. Then
	\begin{eqnarray*}
		&&\int_{\overline{B}_{\delta_1}(z_0)}{\rm div}\,(a(\frac{1}{t}u)Du)udz-t\int_{\overline{B}_{\delta_1}(z_0)}\varphi'_{\lambda}(\frac{1}{t}u)udz+tc_1\int_{\overline{B}_{\delta_1}(z_0)}|Du|^2dz\geq\mu\eta|\overline{B}_{\delta_1}(z_0)|_N\\
		&\Rightarrow&\int_{\overline{B}_{\delta_1}(z_0)}{\rm div}\,(a(\frac{1}{t}u)Du)udz+tc_1\int_{\overline{B}_{\delta_1}(z_0)}|Du|^2dz>0\\
		&&\hspace{4cm}(\mbox{recall that}\ \varphi'_{\lambda}\ \mbox{is increasing and}\ \varphi'_{\lambda}(0)=0).
	\end{eqnarray*}
	
	Using the nonlinear Green's identity (see Gasinski and Papageorgiou \cite{7}, Theorem 2.4.53, p. 210), we obtain
	$$0<-\int_{\overline{B}_{\delta_1}(z_0)}a(\frac{1}{t}u)|Du|^2dz+\int_{\partial\overline{B}_{\delta_1}(z_0)}a(\frac{1}{t}u)\frac{\partial u}{\partial n}ud\sigma+tc_1\int_{\overline{B}_{\delta_1}(z_0)}|Du|^2dz.$$
	
	Here by $\sigma(\cdot)$ we denote the $(N-1)$-dimensional Hausdorff (surface) measure defined on $\partial\Omega$. Hence we ahve
	\begin{eqnarray*}
		&&0<-c_1\int_{\overline{B}_{\delta_1}(z_0)}|Du|^2dz+\int_{\partial \overline{B}_{\delta_1}(z_0)}a(\frac{1}{t}u)\frac{\partial u}{\partial n}ud\sigma+tc_1\int_{\overline{B}_{\delta_1}(z_0)}|Du|^2dz\\
		&&\hspace{1cm}(\mbox{see hypothesis $H(a)$}),\\
		&\Rightarrow&0<\int_{\partial \overline{B}_{\delta_1}(z_0)}a(\frac{1}{t}u)\frac{\partial u}{\partial n}ud\sigma\ (\mbox{recall that}\ t\in(0,1)),\\
		&\Rightarrow&0<c_2\int_{\partial \overline{B}_{\delta_1}(z_0)}\frac{\partial u}{\partial n}ud\sigma\ (\mbox{see hypothesis $H(a)$}).
	\end{eqnarray*}
	
	Thus we can find a continuous path $\{c(t)\}_{t\in[0,1]}$ in $\overline{B}_{\delta_1}(z_0)$ with $c(0)=z_0$ such that
	\begin{eqnarray*}
		&&a<\int^1_0u(c(t))(Du(c(t)),c'(t))_{\RR^N}dt\\
		&&=\int^1_0\frac{1}{2}\frac{d}{dt}u(c(t))^2dt\\
		&&=\frac{1}{2}[u(c(1))-u(z_0)],\\
		&\Rightarrow&u(z_0)<u(c(1)),
	\end{eqnarray*}
	which contradicts the choice of $z_0$. So, we cannot have $z_0\in\Omega$.
	
	Therefore we assume that $z_0\in\partial\Omega$. Since $u\in C^1_n(\overline{\Omega})$, again we have $Du(z_0)=0$ and so the above argument applies with $\partial \overline{B}_{\delta_1}(z_0)$ replaced by $\partial \overline{B}_{\delta_1}(z_0)\cap\Omega$.
	
	Hence we have proved that
	\begin{eqnarray}\label{eq58}
		||u||_{\infty}\leq M\ \mbox{for all}\ u\in E\ (\mbox{here}\ M>0\ \mbox{is as in hypothesis}\ H(F)(iv)).
	\end{eqnarray}
	
	We use (\ref{eq58}) in (\ref{eq54}) and have
	\begin{eqnarray}\label{eq59}
		&&-t\int_{\Omega}fudz\leq tc_{13}(1+||Du||_2)\ \mbox{for some}\ c_{13}>0,\nonumber\\
		&\Rightarrow&c_1||Du||^2_2\leq c_{13}(1+||Du||_2)\ (\mbox{see (\ref{eq53}) and recall}\ t\in(0,1)),\nonumber\\
		&\Rightarrow&||Du||_2\leq c_{14}\ \mbox{for some}\ c_{14}>0,\ \mbox{all}\ u\in E.
	\end{eqnarray}
	
	Then (\ref{eq58}), (\ref{eq59}) imply that $E\subseteq H^1(\Omega)$ is bounded. Invoking Theorem 2 of Lieberman \cite{13}, we can find $c_{15}>0$ such that
	\begin{eqnarray*}
		&&||u||_{C^1(\overline{\Omega})}\leq c_{15}\ \mbox{for all}\ u\in E,\\
		&\Rightarrow&E\subseteq C^1_n(\overline{\Omega})\ \mbox{is bounded}.
	\end{eqnarray*}
	
	This proves Claim 2.
	
	Recall that hypotheses $H(F)(i),(ii),(iii)$ imply that $N_1$ is a multifunction which is usc from $C^1_n(\overline{\Omega})$ with the norm topology into $L^{\infty}(\Omega)$ with the $w^*$-topology (see Hu and Papageorgiou \cite[p. 21]{11}). This fact, Proposition \ref{prop5} and Claim 2, permit the use of Theorem \ref{th1}. So, we can find $u_{\lambda}\in C^1_n(\overline{\Omega})$ such that
	\begin{eqnarray*}
		&&u_{\lambda}\in K_{\lambda}N_1(u_{\lambda}),\\
		&\Rightarrow&u_{\lambda}\in C^1_n(\overline{\Omega})\ \mbox{is a solution of problem (\ref{eq50})}.
	\end{eqnarray*}
\end{proof}

Now we are ready for the existence theorem concerning problem (\ref{eq1}).
\begin{theorem}\label{th7}
	If hypotheses $H(a),H(\varphi),H(F)$ hold, then problem (\ref{eq1}) admits a solution $u\in C^1_n(\overline{\Omega})$.
\end{theorem}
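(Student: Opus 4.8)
The plan is to pass to the limit as $\lambda\to0^+$ in the approximate problems (\ref{eq50}). For each $\lambda\in(0,1]$ fix a solution $u_\lambda\in C^1_n(\overline{\Omega})$ (Proposition \ref{prop6}) together with an $f_\lambda\in N(u_\lambda)$, so that
\[
\int_\Omega a(u_\lambda)(Du_\lambda,Dh)_{\RR^N}\,dz+\int_\Omega\bigl(\varphi'_\lambda(u_\lambda)+f_\lambda\bigr)h\,dz=0\qquad\text{for all }h\in H^1(\Omega).
\]
First I would obtain a priori bounds that are \emph{uniform in} $\lambda$. Rerunning the Nagumo--Hartman (maximum-principle) argument from the proof of Proposition \ref{prop6} with the parameter $t$ fixed equal to $1$ — now the relevant facts are $H(F)(iv)$, the nonlinear Green identity, and $\varphi'_\lambda(u_\lambda)u_\lambda\ge0$ (since $\varphi'_\lambda$ is increasing with $\varphi'_\lambda(0)=0$) — gives $\|u_\lambda\|_\infty\le M$ with $M$ as in $H(F)(iv)$. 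Testing the equation with $u_\lambda$, discarding the nonnegative term $\int_\Omega\varphi'_\lambda(u_\lambda)u_\lambda$ and using $H(F)(v)$ together with $\|u_\lambda\|_\infty\le M$, we get $c_1\|Du_\lambda\|_2^2\le c(1+\|Du_\lambda\|_2)$, hence a uniform bound in $H^1(\Omega)$. The step that controls the subdifferential term is to test with $\varphi'_\lambda(u_\lambda)\in H^1(\Omega)\cap L^\infty(\Omega)$: since $D\bigl(\varphi'_\lambda(u_\lambda)\bigr)=(\varphi'_\lambda)'(u_\lambda)Du_\lambda$ with $(\varphi'_\lambda)'\ge0$ a.e.\ and $a(\cdot)\ge c_1>0$, the principal part contributes a nonnegative quantity, so $\|\varphi'_\lambda(u_\lambda)\|_2\le\|f_\lambda\|_2$, the right-hand side being bounded by $H(F)(iii)$ and the $H^1$-bound. (More generally, testing with $|\varphi'_\lambda(u_\lambda)|^{p-2}\varphi'_\lambda(u_\lambda)$ yields $\|\varphi'_\lambda(u_\lambda)\|_p\le\|f_\lambda\|_p$ for each $p\ge2$; these $L^p$-estimates, a Moser iteration as in Proposition \ref{prop3}, and Theorem~2 of Lieberman \cite{13} will be used at the end to bootstrap the regularity of the limit.)

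Now let $\lambda_n\to0^+$ and pass to a subsequence along which $u_{\lambda_n}\stackrel{w}{\to}u$ in $H^1(\Omega)$, $u_{\lambda_n}\to u$ in $L^2(\Omega)$ and a.e.\ in $\Omega$, and $\varphi'_{\lambda_n}(u_{\lambda_n})\stackrel{w}{\to}g$, $f_{\lambda_n}\stackrel{w}{\to}f$ in $L^2(\Omega)$ (the latter two by the uniform $L^2$-bounds above); arguing as in the proof of Proposition \ref{prop5}, also $\hat a(u_{\lambda_n})\stackrel{w}{\to}\hat a(u)$ in $H^1(\Omega)^*$. The decisive point is to upgrade this to \emph{strong} convergence $u_{\lambda_n}\to u$ in $H^1(\Omega)$: taking $h=u_{\lambda_n}-u$ in the weak formulation gives
\[
\langle\hat a(u_{\lambda_n}),u_{\lambda_n}-u\rangle=-\int_\Omega\bigl(\varphi'_{\lambda_n}(u_{\lambda_n})+f_{\lambda_n}\bigr)(u_{\lambda_n}-u)\,dz\longrightarrow0,
\]
because $\{\varphi'_{\lambda_n}(u_{\lambda_n})+f_{\lambda_n}\}$ is bounded in $L^2(\Omega)$ while $u_{\lambda_n}-u\to0$ in $L^2(\Omega)$; hence, by property (GP) in the proof of Proposition \ref{prop2}, $u_{\lambda_n}\to u$ in $H^1(\Omega)$. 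Passing to a further subsequence, $Du_{\lambda_n}\to Du$ a.e.\ in $\Omega$, and passing to the limit in the weak formulation gives $\int_\Omega a(u)(Du,Dh)_{\RR^N}\,dz+\int_\Omega(g+f)h\,dz=0$ for all $h\in H^1(\Omega)$.

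It remains to identify $g$ and $f$. For $g$: the Moreau--Yosida approximation of the maximal monotone operator induced on $H=L^2(\Omega)$ by $w\mapsto\int_\Omega\varphi(w(z))\,dz$ has Yosida derivative equal to $N_{\varphi'_\lambda}$, so the third property of Moreau--Yosida approximations recalled in Section~2, applied in $H=L^2(\Omega)$ with $\lambda_n\to0$, $u_{\lambda_n}\to u$ in $L^2(\Omega)$ and $\varphi'_{\lambda_n}(u_{\lambda_n})\stackrel{w}{\to}g$, yields $g(z)\in\partial\varphi(u(z))$ for a.a.\ $z\in\Omega$. For $f$: from $f_{\lambda_n}(z)\in F(z,u_{\lambda_n}(z),Du_{\lambda_n}(z))$ a.e., $f_{\lambda_n}\stackrel{w}{\to}f$ in $L^2(\Omega)$, and $u_{\lambda_n}(z)\to u(z)$, $Du_{\lambda_n}(z)\to Du(z)$ a.e., Mazur's lemma gives convex combinations of the $f_{\lambda_n}$ converging to $f$ strongly in $L^2(\Omega)$, hence pointwise a.e.\ along a subsequence; the closedness of $(x,\xi)\mapsto F(z,\cdot,\cdot)$ ($H(F)(ii)$) and the convexity of its values then force $f(z)\in F(z,u(z),Du(z))$ a.e.\ (the standard convergence theorem for multifunctions, cf.\ Hu and Papageorgiou \cite{11}). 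Thus $u\in H^1(\Omega)$ is a solution of (\ref{eq1}) in the sense of the Introduction. Finally $\|u\|_\infty\le M$; by the weak formulation and the nonlinear Green identity, $-\mathrm{div}\,\bigl(a(u)Du\bigr)=-(g+f)$ a.e.\ in $\Omega$ with $\partial u/\partial n|_{\partial\Omega}=0$, and a bootstrap based on the $L^p$-estimates above, Proposition \ref{prop3} and Theorem~2 of Lieberman \cite{13} improves this to $u\in C^1_n(\overline{\Omega})$.

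I expect the main obstacle to be carrying out the passage to the limit simultaneously in the three structurally different parts of (\ref{eq50}) — the quasilinear principal operator, the single-valued monotone penalization $\varphi'_\lambda$, and the set-valued, gradient-dependent reaction $F$ — in a mutually compatible way. Two points are delicate: (i) producing \emph{any} limit for the penalization $\varphi'_\lambda(u_\lambda)$, which is precisely the role of the monotone test functions $\varphi'_\lambda(u_\lambda)$ (and $|\varphi'_\lambda(u_\lambda)|^{p-2}\varphi'_\lambda(u_\lambda)$), the sign of the resulting principal contribution being the crucial structural feature; and (ii) obtaining strong convergence of the gradients $Du_{\lambda_n}$ — without it $F(z,u,Du)$ is meaningless in the limit — for which the pseudomonotonicity of $\hat a$ (Proposition \ref{prop2}) is the right tool, working exactly because the reaction terms happen to be bounded in $L^2(\Omega)$, which pairs against the $L^2$-null sequence $u_{\lambda_n}-u$. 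Once both are in place, identifying $g\in\partial\varphi(u)$ and $f\in F(z,u,Du)$, together with the final $C^1$-regularity, are routine.
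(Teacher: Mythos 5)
Your passage to the limit at the level of the weak formulation is essentially sound, and in fact you reach strong $H^1(\Omega)$-convergence of $u_{\lambda_n}$ by a different device than the paper: you pair the equation with $u_{\lambda_n}-u$ and invoke the pseudomonotonicity argument of Proposition \ref{prop2}, whereas the paper never argues this way. Instead, the paper applies Theorem~2 of Lieberman \cite{13} to the approximate solutions $u_n$ themselves, using the uniform bound $\|u_n\|_\infty\le M$ to get $\|u_n\|_{C^{1,\alpha}(\overline{\Omega})}\le c_{18}$ for all $n$, and then the compact embedding $C^{1,\alpha}(\overline{\Omega})\hookrightarrow C^1(\overline{\Omega})$ gives $u_n\to u$ in $C^1(\overline{\Omega})$. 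This single step does double duty: it provides the gradient convergence needed to identify $f(z)\in F(z,u(z),Du(z))$ and $g(z)\in\partial\varphi(u(z))$ (the paper cites Proposition 6.6.33 and Corollary 3.2.51 of \cite{20}; your Mazur-lemma/convergence-theorem argument and your use of the third Moreau--Yosida property in $H=L^2(\Omega)$ are acceptable substitutes), and, crucially, it yields the asserted regularity $u\in C^1_n(\overline{\Omega})$ of the limit.

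The genuine gap in your proposal is precisely this last point, which is part of the statement being proved. You only produce $u\in H^1(\Omega)\cap L^\infty(\Omega)$ solving the limit inclusion with $g,f\in L^2(\Omega)$, and the proposed ``bootstrap based on the $L^p$-estimates, Proposition \ref{prop3} and Lieberman'' does not close. Indeed, the estimates $\|\varphi'_\lambda(u_\lambda)\|_p\le\|f_\lambda\|_p$ are not uniform, because by $H(F)(iii)$ the right-hand side is controlled by $\|Du_\lambda\|_p$, which is exactly the quantity you have no uniform bound for (for $p>2$), so they give no $L^\infty$ (or high $L^p$) control of $g$; in the limit equation $-\mathrm{div}(a(u)Du)=-(g+f)$ the term $g$ is known only in $L^2(\Omega)$, which is insufficient for Lieberman's $C^{1,\alpha}$ theory (and may genuinely fail to be bounded a priori, e.g.\ when $\varphi$ is an indicator function and $\partial\varphi(u(z))$ is a half-line on the contact set); and Proposition \ref{prop3} concerns only $L^\infty$ bounds on $u$ for a Carath\'eodory right-hand side $g(z,x)$ without gradient dependence, so it contributes nothing toward gradient regularity here. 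To repair the argument you should follow the paper: obtain the uniform $C^{1,\alpha}(\overline{\Omega})$ estimate at the level of the approximations $u_n$ (where the right-hand side has the structure Lieberman's theorem requires) and pass to the limit in $C^1(\overline{\Omega})$, which then also simplifies the identification of $f$ and $g$.
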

\begin{proof}
	Let $\lambda_n\rightarrow 0^+$. From Proposition \ref{prop6}, we know that problem (\ref{eq50}) (with $\lambda=\lambda_n$) has a solution $u_n=u_{\lambda_n}\in C^1_n(\overline{\Omega})$. Moreover, from the proof of that proposition, we have
	\begin{equation}\label{eq60}
		||u_n||_{\infty}\leq M\ \mbox{for all}\ n\in\NN\ (\mbox{see (\ref{eq58})}).
	\end{equation}
	
	For every $n\in\NN$, we have
	\begin{equation}\label{eq61}
		\hat{a}(u_n)+N_{\varphi'_{\lambda_n}}(u_n)+f_n=0\ \mbox{with}\ f_n\in N(u_n)\ (\mbox{see the proof of Proposition \ref{prop6}}).
	\end{equation}
	
	On (\ref{eq61}) we act with $u_n$ and obtain
	\begin{eqnarray}\label{eq62}
		&&c_1||Du_n||^2_2\leq||f_n||_2||u_n||_2\nonumber\\
		&&(\mbox{see hypothesis $H(a)$ and recall that}\ \varphi'_{\lambda}(s)s\geq 0\ \mbox{for all}\ s\in\RR),\nonumber\\
		&\Rightarrow&||Du_n||_2\leq c_{16}\ \mbox{for some}\ c_{16}>0,\ \mbox{all}\ n\in\NN\\
		&&(\mbox{see (\ref{eq60}) and hypothesis}\ H(F)(iii)).\nonumber
	\end{eqnarray}
	
	From (\ref{eq60}) and (\ref{eq62}) it follows that
	$$\{u_n\}_{n\geq 1}\subseteq H^1(\Omega)\ \mbox{is bounded}.$$
	
	So, by passing to a subsequence if necessary, we may assume that
	$$u_n\stackrel{w}{\rightarrow}u\ \mbox{in}\ H^1(\Omega)\ \mbox{and}\ u_n\rightarrow u\ \mbox{in}\ L^2(\Omega).$$
	
	Acting on (\ref{eq61}) with $N_{\varphi'_{\lambda_n}}(u_n)(\cdot)=\varphi'_{\lambda_n}(u_n(\cdot))\in C(\overline{\Omega})\cap H^1(\Omega)$ (recall that $\varphi'_{\lambda_n}(\cdot)$ is Lipschitz continuous and see Marcus and Mizel \cite{15}), we have
	\begin{equation}\label{eq63}
		\int_{\Omega}a(u_n)(Du_n,D\varphi'_{\lambda}(u_n))_{\RR^N}dz+
||N_{\varphi'_{\lambda_n}}(u_n)||^2_2=-\int_{\Omega}f_n\varphi'_{\lambda_n}(u_n)dz.
	\end{equation}
	
	From the chain rule of Marcus and Mizel \cite{15}, we have
	\begin{equation}\label{eq64}
		D\varphi'_{\lambda_n}(u_n)=\varphi''_{\lambda_n}(u_n)Du_n.
	\end{equation}
	
	Since $\varphi'_{\lambda_n}(\cdot)$ is increasing (recall that $\varphi_{\lambda_n}$ is convex), we have
	\begin{equation}\label{eq65}
		\varphi''_{\lambda_n}(u_n(z))\geq 0\ \mbox{for almost all}\ z\in\Omega .
	\end{equation}
	
	Using (\ref{eq64}), (\ref{eq65}) and hypothesis $H(a)$, we see that
	\begin{equation}\label{eq66}
		0\leq\int_{\Omega}a(u_n)(Du_n,D\varphi'_{\lambda_n}(u_n))_{\RR^N}dz.
	\end{equation}
	
	Using (\ref{eq66}) in (\ref{eq63}), we obtain
	\begin{eqnarray*}
		&&||N_{\varphi'_{\lambda_n}}(u_n)||^2_2\leq||f_n||_2||N_{\varphi'_{\lambda_n}}(u_n)||_2\ \mbox{for all}\ n\in\NN,\\
		&\Rightarrow&||N_{\varphi'_{\lambda_n}}(u_n)||_2\leq||f_n||_2\leq c_{17}\ \mbox{for some}\ c_{17}>0,\ \mbox{all}\ n\in\NN\\
		&&(\mbox{see (\ref{eq60}) and hypothesis H(F)(iii)})\\
		&\Rightarrow&\{N_{\varphi'_{\lambda_n}}(u_n)\}_{n\geq 1}\subseteq L^2(\Omega)\ \mbox{is bounded}.
	\end{eqnarray*}
	
	So, we may assume that
	\begin{equation}\label{eq68}
		N_{\varphi'_{\lambda_n}}\stackrel{w}{\rightarrow}g\ \mbox{and}\ f_n\stackrel{w}{\rightarrow}f\ \mbox{in}\ L^2(\Omega).
	\end{equation}
	
	As in the proof of Proposition \ref{prop5} (see (\ref{eq47})), we show that
	\begin{equation}\label{eq69}
		\hat{a}(u_n)\stackrel{w}{\rightarrow}\hat{a}(u)\ \mbox{in}\ H^1(\Omega)^*.
	\end{equation}
	
	So, if in (\ref{eq61}) we pass to the limit as $n\rightarrow\infty$ and use (\ref{eq68}) and (\ref{eq69}), we obtain
	\begin{eqnarray}\label{eq70}
		&&\hat{a}(u)+g+f=0,\nonumber\\
		&\Rightarrow&-{\rm div}\,(a(u(z))Du(z))+g(z)+f(z)=0\ \mbox{for almost all}\ z\in\Omega,\ \frac{\partial u}{\partial n}=0\ \mbox{on}\ \partial\Omega\\
		&&(\mbox{see the proof of Proposition \ref{prop4}}).\nonumber
	\end{eqnarray}
	
	Because of (\ref{eq60}) and Theorem 2 of Lieberman \cite{13}, we know that there exist $\alpha\in(0,1)$ and $c_{18}>0$ such that
	\begin{eqnarray}\label{eq71}
		&&u_n\in C^{1,\alpha}(\overline{\Omega}),\ ||u_n||_{C^{1,\alpha}(\overline{\Omega})}\leq c_{18}\ \mbox{for all}\ n\in\NN,\nonumber\\
		&\Rightarrow&u_n\rightarrow u\ \mbox{in}\ C^1(\overline{\Omega})\ (\mbox{recall that}\ C^{1,\alpha}(\overline{\Omega})\ \mbox{is embedded compactly into}\ C^1(\overline{\Omega})).
	\end{eqnarray}
	
	Recall that
	\begin{eqnarray}\label{eq72}
		&&f_n(z)\in F(z,u_n(z),Du_n(z))\ \mbox{for almost all}\ z\in\Omega,\ \mbox{all}\ n\in\NN,\nonumber\\
		&\Rightarrow&f(z)\in F(z,u(z),Du(z))\nonumber\\
		&&(\mbox{see (\ref{eq68}), (\ref{eq71}), hypothesis H(F)(ii) and Proposition 6.6.33, p. 521 of \cite{20}}),\nonumber\\
		&\Rightarrow&f\in N(u).
	\end{eqnarray}
	
	Also, from (\ref{eq68}), (\ref{eq71}) and Corollary 3.2.51, p. 179 of \cite{20}, we have
	\begin{equation}\label{eq73}
		g(z)\in\partial\varphi(u(z))\ \mbox{for almost all}\ z\in\Omega.
	\end{equation}
	
	So, from (\ref{eq70}), (\ref{eq72}), (\ref{eq73}) we conclude that $u\in C^1_n(\overline{\Omega})$ is a solution of problem (\ref{eq1}).
\end{proof}

\section{Examples}

In this section we present two concrete situations illustrating our result.

For the first, let $\mu\leq 0$ and consider the function
$$\varphi(x)=\left\{\begin{array}{ll}
	+\infty&\mbox{if}\ x<\mu\\
	0&\mbox{if}\ \mu\leq x.
\end{array}\right.$$

Evidently we have
$$\varphi\in\Gamma_0(\RR)\ \mbox{and}\ 0\in\partial\varphi(0).$$

In fact note that
$$\partial \varphi(x)=\left\{\begin{array}{ll}
	\emptyset&\mbox{if}\ x<\mu\\
	\RR_-&\mbox{if}\ x=\mu\\
	\{0\}&\mbox{if}\ \mu<x.
\end{array}\right.$$

Also consider a Carath\'eodory function $f:\Omega\times\RR\times\RR^N\rightarrow\RR$ which satisfies hypotheses $H(F)(iii),(iv),(v)$. For example, we can have the following function (for the sake of simplicity we drop the $z$-dependence):
$$f(x,\xi)=c\sin x+x-\ln(1+|\xi|)+\vartheta\ \mbox{with}\ c_1\vartheta>0.$$

Then according to Theorem \ref{th7}, we can find a solution $u_0\in C^1(\overline{\Omega})$ for the following problem:
$$\left\{\begin{array}{l}
	{\rm div}\,(a(u(z))Du(z))\leq f(z,u(z),Du(z))\ \mbox{for almost all}\  z\in\{u=\mu\},\\
	{\rm div}\,(a(u(z))Du(z))=f(z,u(z),Du(z))\ \mbox{for almost all}\  z\in\{\mu<u\},\\
	u(z)\geq\mu\ \mbox{for all}\ z\in\overline{\Omega},\frac{\partial u}{\partial n}=0\ \mbox{on}\ \partial\Omega .
\end{array}\right\}$$

For the second example, we consider a variational-hemivariational inequality. Such problems arise in mechanics, see Panagiotopoulos \cite{19}. So, let $j(z,x)$ be a locally Lipschitz integrand (that is, for all $x\in\RR$, $z\mapsto j(z,x)$ is measurable and for almost all $z\in\Omega$, $x\mapsto j(z,x)$ is locally Lipschitz). By $\partial_cj(z,x)$ we denote the Clarke subdifferential of $j(z,\cdot)$. We impose the following conditions on the integrand $j(z,x)$:
\begin{itemize}
	\item[(a)] for almost all $z\in\Omega$, all $x\in\RR$ and all $v\in\partial j(z,x)$
	$$|v|\leq\hat{c}_1(1+|x|)\ \mbox{for almost all}\ z\in\Omega,\ \mbox{all}\ x\in\RR,\ \mbox{with}\ \hat{c}_1>0;$$
	\item[(b)] $0<\hat{c}_2\leq\liminf\limits_{x\rightarrow\pm\infty}\frac{v}{x}\leq\limsup\limits_{x\rightarrow\pm\infty}\frac{v}{x}\leq\hat{c}_3$ uniformly for almost all $z\in\Omega$, all $v\in\partial j(z,x)$
	\item[(c)] $-\hat{c}_4\leq\liminf\limits_{x\rightarrow 0}\frac{v}{x}\leq\limsup\limits_{x\rightarrow 0}\frac{v}{x}\leq\hat{c}_5$ uniformly for almost all $z\in\Omega$, all $v\in\partial j(z,x)$ and with $\hat{c}_4,\hat{c}_5>0$.
\end{itemize}

A possible choice of $j$ is the following (as before for the sake of simplicity we drop the $z$-dependence):
$$j(x)=\left\{\begin{array}{ll}
	\frac{1}{p}|x|^p-\cos(\frac{\pi}{2}|x|)&\mbox{if}\ |x|\leq 1\\
	\frac{1}{2}x^2-\ln|x|+c&\mbox{if}\ 1<|x|
\end{array}\right.\ \mbox{with}\ c=\frac{1}{p}-\frac{1}{2},\ 1<p.$$

We set
$$F(z,x,\xi)=\partial j(z,x)+x|\xi|+\vartheta(z)\ \mbox{with}\ \vartheta\in L^{\infty}(\Omega).$$
 	
	Using (a),(b),(c) above, we can see that hypotheses $H(F)$ are satisfied.
	
	Also, suppose that $\varphi$ satisfies hypothesis $H(\varphi)$. Two specific choices of interest are
	$$\varphi(x)=|x|\ \mbox{and}\ \varphi(x)=i_{[-1,1]}(x)=\left\{\begin{array}{ll}
		0&\mbox{if}\ |x|\leq 1\\
		+\infty&\mbox{if}\ 1<|x|.
	\end{array}\right.$$
	
	Then the following problem admits a solution $u_0\in C^1(\overline{\Omega})$:
	$$\left\{\begin{array}{ll}
		{\rm div}\,(a(u(z))Du(z))\in\partial\varphi(u(z))+F(z,u(z),Du(z))&\mbox{in}\ \Omega,\\
		\frac{\partial u}{\partial n}=0&\mbox{on}\ \partial\Omega.
	\end{array}\right\}$$
	
	The case $\varphi\equiv 0$ (hemivariational inequalities) incorporates problems with discontinuities in which we fill-in the gaps at the jump discontinuities.
	
\medskip
{\bf Acknowledgments.} V. R\u{a}dulescu was supported by a grant of the Romanian National
Authority for Scientific Research and Innovation, CNCS-UEFISCDI, project number PN-II-PT-PCCA-2013-4-0614. D. Repov\v{s}
was supported by the Slovenian Research Agency grants P1-0292-0101, J1-6721-0101, J1-7025-0101 and J1-5435-29-0101.

\end{document}